\newtheorem{theorem}{Theorem}
\newtheorem{corollary}{Corollary}
\newtheorem{lemma}{Lemma}
\newtheorem{remark}{Remark}
\newtheorem{definition}{Definition}
\newtheorem{proposition}{Proposition}
\newtheorem{assumption}{Assumption}
\newtheorem{example}{Example}
\newcommand{\mfs}[1]{{\normalfont\textsf{#1}}}
\newcommand{\mf}{\mathbf}
\begin{document}
\title{\Huge Towards Parameter-free Distributed Optimization: a Port-Hamiltonian Approach}
\author{Rodrigo Aldana-López, Alessandro Macchelli, Giuseppe Notarstefano,\\ Rosario Aragüés, Carlos Sagüés
\footnote{This work was supported via projects PID2021-124137OB-I00 and TED2021-130224B-I00
funded by MCIN/AEI/10.13039/501100011033, by ERDF A way of making Europe and by the
European Union NextGenerationEU/PRTR, by the Gobierno de Aragón under Project DGA T45-23R, by the Universidad de Zaragoza and Banco Santander, by the Consejo Nacional de Ciencia y Tecnología (CONACYT-Mexico) with grant number 739841. \\R. Aldana-López, R. Aragüés and C. Sagüés are with Departamento de Informática e Ingeniería de Sistemas (DIIS) and Instituto de Investigación en Ingeniería de Aragón (I3A), 
Universidad de Zaragoza, Zaragoza 50018, Spain.
(e-mail:  {rodrigo.aldana.lopez@gmail.com, \{raragues, csagues\}@unizar.es}) \\ A. Macchelli and G. Notarstefano are with the Department of Electrical, Electronic and Information Engineering (DEI), University of Bologna, 40136 Bologna, Italy (e-mail: \{alessandro.macchelli, giuseppe.notarstefano\}@unibo.it).}}

\maketitle

\begin{abstract}
This paper introduces a novel distributed optimization technique for networked systems, which removes the dependency on specific parameter choices, notably the learning rate. Traditional parameter selection strategies in distributed optimization often lead to conservative performance, characterized by slow convergence or even divergence if parameters are not properly chosen. In this work, we propose a systems theory tool based on the port-Hamiltonian formalism to design algorithms for consensus optimization programs. Moreover, we propose the Mixed Implicit Discretization (MID), which transforms the continuous-time port-Hamiltonian system into a discrete time one, maintaining the same convergence properties regardless of the step size parameter. The consensus optimization algorithm enhances the convergence speed without worrying about the relationship between parameters and stability. Numerical experiments demonstrate the method's superior performance in convergence speed, outperforming other methods, especially in scenarios where conventional methods fail due to step size parameter limitations.
\end{abstract}

\section{Introduction}
\label{sec:introduction}

Current technological trends lean towards the extended use of machine learning algorithms with increasing complexity and computing power requirements. Fortunately, computation and communication capabilities are readily available in modern devices. For this reason, distributed implementations have emerged as scalable and robust alternatives to deploy these technologies. Since many tasks of importance, e.g., the training of neural networks can be posed as optimization programs, distributed optimization has become a persistently active area of research in the last years (see \cite{Notarstefano2019, yang2019}). The problem of consensus optimization is particularly important, where the unconstrained cost is comprised of the sum of local costs that can be accessed at each device in the network separately \cite{Notarstefano2019}. 

In the following, we review the literature on algorithms developed for distributed optimization problems. First, we examine algorithms formulated in continuous time, leveraging tools from systems theory to guarantee stability. Subsequently, we discuss methodologies that operate in discrete time, reflecting more practical communication scenarios. Finally, we mention works using the Port-Hamiltonian System (PHS) formalism in multi-agent contexts and parameter-free discretization.

Many algorithms for addressing distributed optimization problems through continuous-time systems exist in the literature. For instance, a method for consensus optimization over weighted digraphs was introduced in \cite{gharesifard2014distributed}. A different approach, employing a proportional-integral protocol to tackle constrained distributed optimization, was proposed in \cite{yang2017multi}. The strategy in \cite{zeng2017distributed} aims to resolve constrained distributed convex optimization problems via non-smooth analysis. Meanwhile, \cite{lin2017distributed} presented a protocol with finite-time convergence in solving constrained distributed optimization issues. The use of adaptive control techniques for unconstrained distributed optimization was explored in \cite{li2018distributed}. A passivity-based methodology for designing and analyzing distributed protocol algorithms in the unconstrained scenario was used in \cite{Hatanaka2018}, with a similar approach adopted for balanced digraphs in \cite{li2020input}.
Furthermore, a distributed protocol leveraging Newton-Raphson iterations was detailed in \cite{moradian2022distributed}. These contributions underscore the benefits of modeling distributed optimization algorithms in continuous time, allowing for the application of systems theory tools, such as Lyapunov theory or passivity. However, given that communication among agents in digital platforms occurs in discrete time, research has pivoted towards considering discrete time interactions. A leading methodology for consensus optimization in discrete time, known as the gradient tracking method, was introduced in \cite{nedic2017achieving}, incorporating a dynamic average consensus component to track the gradient of the objective function. This method has spawned various adaptations, including those tailored for nonconvex programs \cite{dilorenzo2016next} and protocols integrating Newton-Raphson iterations \cite{varagnolo2016newton}. Extensions to accommodate time-varying and stochastic networks were proposed in \cite{nedic2017achieving, xu2017convergence}, respectively. Efforts to enhance convergence speed are used in \cite{xi2017addopt,qu2018harnessing}. Techniques such as push-sum and push-pull protocols for message passing, aimed at addressing digraphs, were applied in \cite{scutari2019distributed, pu2020push}. An adaptive momentum term to boost convergence rates was introduced in \cite{carnevale2020distributed}. Using a systems structure perspective, most of these variations can be understood using a general structural framework as in \cite{vanscoy2022universal}. Despite their generally effective performance, these algorithms constrain specific parameters, like the learning rate. Tuning such parameters is critical, especially in multi-agent settings where the constraints depend on global information such as the network topology. For instance, a large learning rate may result in instability, and a small learning rate may lead to slow convergence. 

An alternative line of research for incorporating discrete time communication involves employing continuous-time systems with event-triggered communication. One of the first works in this context is \cite{kia2015distributed}. Similar settings, including quantized information, were explored in \cite{liu2016event}. Extensions for quadratic optimization programs with constraints were considered in \cite{zhao2018distributed}. Event-triggered mechanisms for second-order dynamics agents were investigated in \cite{yi2018distributed}. The works  \cite{kajiyama2018distributed, adachi2021distributed} explored edge-based approaches for the design of the triggering conditions. A noteworthy contribution in this domain is \cite{guido2023}, which extends the continuous-time version of the gradient tracking algorithm to include event-triggered communication. While adopting many advantages from continuous-time methodologies, these approaches also need an upper limit on the interval between communications, as indicated in \cite{kia2015distributed}. Additionally, they require the implementation of a discrete time variant for the continuous-time algorithm, exemplified by \cite{guido2023}, where the event-triggering condition is evaluated at each timestep during experiments. Hence, parameter constraints cannot be avoided either in this case.

Henceforth, this motivates the need to develop distributed optimization methods that are not limited by design parameter constraints. An important breakthrough for parameter-free optimization was presented in \cite{portHgrad} where the Port-Hamiltonian System (PHS) formalism was used to design a continuous-time gradient flow for unconstrained optimization in a single-agent setting. This formalism allowed the authors to propose a parameter-free symplectic discretization. Generally, a PHS is a class of nonlinear systems describing a wide range of systems of interest, particularly suitable for power-conserving interconnections between subsystems \cite{Schaft2017}. This formalism has been proved useful to describe and analyze multi-agent algorithms for consensus \cite{Jieqiang2017consensus} and distributed control in \cite{van2010}. Moreover, geometric integration using the discrete gradient has been shown to preserve the PHS structure \cite{Macchelli2023,Macchelli2022}, with a precise parallelism between continuous and discrete time. In addition, under suitable conditions, this type of discretization can ensure convergence regardless of the step size parameter. For instance, the discrete gradient method was used in \cite{discretegrad} to develop a parameter-free optimization algorithm. However, despite the advantages of PHS and geometric integration methods, these tools have yet to be used to solve distributed optimization problems. 

The contributions of this work are twofold. First, we propose a PHS framework for the first time to desig and analyze distributed optimization algorithms. In particular, we apply it to solve an unconstrained consensus optimization program, where the convergence proofs are based on PHS tools. Second, our proposal allows the design in continuous time with a novel parameter-free discretization, referred to here as Mixed Implicit Discretization (MID), that enables the preservation of the PHS structure. In this sense, all agents in the network choose a step size parameter without worrying about the instability of the overall system. In addition, we discuss how having the step size as a free design parameter can be further used to accelerate convergence.

This manuscript is organized as follows. First, in Section \ref{sec:continous:time}, we provide a problem statement and some preliminaries on PHS. Section \ref{sec:phsopt} presents our PHS-based framework for consensus optimization in continuous time, including some concrete examples. Section \ref{sec:discrete:time} discusses parameter-free discretization. In Section \ref{sec:simulations}, we provide several numerical examples, comparing the proposal's performance against other methods in the literature. Conclusions are drawn in Section \ref{sec:conclusions}, and auxiliary results and proofs are placed in the appendices.

\textbf{Notation:} For a set $S$, let $|S|$ represent its cardinality. Given vectors $\{\mf{x}_i\}_{i\in\mathcal{I}}$ with index set $\mathcal{I}=\{1,\dots,\mfs{N}\}$, we denote with $[\mf{x}_i]_{i\in\mathcal{I}}=[\mf{x}_1^\top,\dots,\mf{x}_\mfs{N}^\top]^\top$. Denote with $\nabla$ the gradient operator. Let $\mathds{1}_n, \mathbf{0}_{n\times m}, \mathbf{I}_n$ denote the column vector of $n$ ones, the zero $n\times m$ matrix and the $n\times n$ identity matrix respectively. We write $\mathds{1},\mf{0},\mf{I}$ when there is no ambiguity on the dimensions, according to the context. The symbol $\otimes$ represents the Kronecker product. For discrete time sequences $\{\mf{x}[k]\}_{k\in\mathbb{N}}$, we often write $\mf{x}$ represents the value of the sequence at an arbitrary time step, $\mf{x}^+$ represents its value at the next time step. Let $\|\mf{x}\|$ represent the Euclidean norm of a vector $\mf{x}$ and $\|\mf{X}\|$ represent the induced Euclidean norm of a matrix $\mf{X}$.

\section{Poblem statement and preliminaries}
\label{sec:continous:time}
\subsection{Problem statement}
\label{sec:problem}
Consider a group of $\mfs{N}\in\mathbb{N}$ agents labeled using an index set $\mathcal{I}=\{1,\dots,\mfs{N}\}$, whose goal is to solve a consensus optimization program of the form:
\begin{equation}
\label{eq:optimal2}
\bm{\theta}^*=\arg\!\min_{\bm{\theta}\in\mathbb{R}^m} \sum_{i=1}^\mfs{N}f_i(\bm{\theta})
\end{equation}
where each $f_i:\mathbb{R}^m\to\mathbb{R}$ is a function which can only be accessed at agent $i\in\mathcal{I}$.

\begin{assumption}
    \label{as:regular:f}
    For all $i\in\mathcal{I}$, $f_i:\mathbb{R}^m\to\mathbb{R}$ is strongly convex with constant $\mu>0$ (See Appendix \ref{ap:convex}), and has Lipschitz continuous gradient with constant $L>0$.
\end{assumption}
Note that Assumption \ref{as:regular:f} ensures that the program in \eqref{eq:optimal2} has a unique optimal solution $\bm{\theta}^*\in\mathbb{R}^m$. Moreover, to obtain such optimal solution, agents cooperate by exchanging information with their neighbors. The communication network is modeled by a fixed undirected connected graph $\mathcal{G}=(\mathcal{I},\mathcal{E})$. Here, $\mathcal{I}$ is the vertex set, where each $i\in\mathcal{E}$ represents a concrete agent, and $\mathcal{E}\subseteq \mathcal{I}\times\mathcal{I}$ is the edge set where $(i,j)\in\mathcal{E}$ when agents $i,j\in\mathcal{I}$ can communicate. Moreover, we denote with $\mathcal{N}_i\subseteq\mathcal{I}$ the neighbor set for agent $i\in\mathcal{I}$.

Often, an algorithm to solve the program in \eqref{eq:optimal2} in a distributed fashion often involves a set of design parameters. The algorithm is parameter-free if the constraints for the parameters do not require prior information of the network or the cost function constants $(\mu,L)$ to ensure convergence towards the optimal solution. This work proposes a parameter-free solution to solve the program \eqref{eq:optimal2} based on the PHS structure. 

\subsection{Port-Hamiltonian systems}
\label{sec:phs}

A PHS is a particular class of dynamical system that can be described through the interaction of ports with power-conserving features. In particular, it has the form:
\begin{equation}
\label{eq:phs}
\begin{aligned}
    \dot{\mf{x}} &= \mf{F}(\mf{x})\nabla H(\mf{x}) + \bm{B}_I(\mf{x})\mf{u}_I + \bm{B}_C(\mf{x})\mf{u}_C
\end{aligned}
\end{equation}
where $\mf{x}(t)\in\mathbb{R}^n$ is the state variable. The system in \eqref{eq:phs} has four classes of ports. First, a storage port $(-\dot{\mf{x}},\nabla H(\mf{x}))$ with scalar storage function $H:\mathbb{R}^n\to\mathbb{R}$ also called the Hamiltonian energy. Second, a resistive structure associated with the symmetric part of $\mf{F}$, which satisfies $\mf{F}(\mf{x})+\mf{F}(\mf{x})^\top\preceq\mf{0}$. Third, an interaction port with $(\mf{u}_I, \mf{y}_I)$. This port usually represents the interaction of \eqref{eq:phs} with other systems. Fourth, a control port $(\mf{u}_C,\mf{y}_C)$. This port is used for feedback control.

It can be verified that \eqref{eq:phs} is passive in the sense that:
$$
\dot{H}(\mf{x})\leq \mf{y}_I^\top\mf{u}_I+\mf{y}_C^\top\mf{u}_C
$$
with outputs 
$$
\begin{aligned}
\mf{y}_I&=\mf{B}_I(\mf{x})^\top\nabla H(\mf{x}) \\ \mf{y}_C&=\mf{B}_C(\mf{x})^\top\nabla H(\mf{x}).
\end{aligned}
$$ 
As will become evident in subsequent sections, the passivity property helps design control feedback laws and obtain Lyapunov functions based on $H$. We refer the reader to \cite{Schaft2017} for more details on PHS.

\section{PHS-based optimization in continuous time}
\label{sec:phsopt}
\subsection{System proposal}
To go towards a parameter-free discrete time algorithm to solve \eqref{eq:optimal2}, we start by considering a continuous-time formulation based on a PHS. Assume that each agent $i\in\mathcal{I}$ stores a state variable $\mf{x}_i(t)\in\mathbb{R}^n$. The proposed dynamics for $\mf{x}_i(t)$ are given as:
\begin{equation}
\label{eq:general:MAS}
\begin{aligned}
    \dot{\mf{x}}_i &= \mf{F}_i\nabla H_i(\mf{x}_i) +\bm{\phi}_i(\nabla H_i(\mf{x}_i)) +\sum_{j\in\mathcal{N}_i}\mf{M}\left(\nabla H_i(\mf{x}_i)-\nabla H_j(\mf{x}_j)\right)
    \end{aligned}
\end{equation}
Where $\mf{F}_i,\mf{M}$ are constant matrices satisfiying $\mf{F}_i+\mf{F}_i^\top\preceq \mf{0}$ and  $\mf{M}+\mf{M}^\top\preceq \mf{0}$. Moreover, $H_i:\mathbb{R}^n\to\mathbb{R}$ is the storage function at agent $i$ and $\bm{\phi}_i:\mathbb{R}^n\to\mathbb{R}^n$ is a nonlinear vector field. Time dependence for the state was omitted in \eqref{eq:general:MAS} for readability. For the same reason, we will also omit state dependence on $H_i$ when there is no ambiguity. 

Note that the system in \eqref{eq:general:MAS} is a PHS since the symmetric part of $\mf{F}_i+|\mathcal{N}_i|\mf{M}$ is negative semi-definite. Additionally, it includes $|\mathcal{N}_i|$ interaction ports, each connected to one of the neighbors of the agent with input $\nabla H_j(\mf{x}_j)$ and input matrix $-\mf{M}$. Moreover, it features a local feedback control port with input $\bm{\phi}_i(\nabla H_i(\mf{x}_i))$ and input matrix $\mf{I}_n$. Note that the terms of the form $\nabla H_i-\nabla H_j$ in \eqref{eq:general:MAS} define an agreement error between neighboring agents and promote consensus.  

The remainder of this section aims to show that the PHS structure in \eqref{eq:general:MAS} naturally ensures convergence towards the equilibrium. Hence, given that stability is provided, the remaining degrees of freedom, namely $\mf{F}_i,\mf{M},\bm{\phi}_i, H_i$ can be used to design equilibrium to coincide with the solution of an optimization program of interest. To formalize these ideas, we make the following assumption.

\begin{assumption}[Well posedness]\label{as:well:posedness}
Let $H_i, \mf{F}_i, \mf{M}, {\bm{\phi}}_i$ be such that:
\begin{enumerate}
    \item  The PHS in \eqref{eq:general:MAS} has a unique solution for all $t\geq 0, \mf{x}_i(0)\in\mathbb{R}^n, i\in\mathcal{I}$.
    \item There exist unique constant vectors $\{\mf{x}_i^*\}_{i\in\mathcal{I}}$ such that
\begin{equation}
\label{eq:general:equilibrium}
\begin{aligned}
    \mf{0} &= \mf{F}_i\nabla H_i(\mf{x}_i^*) +\bm{\phi}(\nabla H_i(\mf{x}_i^*)) +\sum_{j\in\mathcal{N}_i}\mf{M}\left(\nabla H_i(\mf{x}_i^*)-\nabla H_j(\mf{x}_j^*)\right)
    \end{aligned}
\end{equation}
\end{enumerate}
\end{assumption}

To show convergence towards the unique equilibrium point satisfying \eqref{eq:general:equilibrium}, we write the joint dynamics of all agents incorporating \eqref{eq:general:MAS} for all $i\in\mathcal{I}$. Let the joint storage function 
$$
H(\mf{x}) = \sum_{i\in\mathcal{I}}H_i(\mf{x}_i),$$ the joint output $\mf{y} = [\nabla H_i(\mf{x}_i)]_{i\in\mathcal{I}}$ and the joint input $\bm{\phi}(\mf{y}) = [\bm{\phi}_i(\nabla H_i(\mf{x}_i))]_{i\in\mathcal{I}}$ with joint state $\mf{x}=[\mf{x}_i]_{i\in\mathcal{I}}$. Moreover, let  $\mf{F}=\text{blockdiag}(\mf{F}_1,\dots,\mf{F}_\mfs{N})$. Hence, \eqref{eq:general:MAS} can be written in compact form as follows. 
\begin{equation}
\label{eq:mas:total}
\dot{\mf{x}} = (\mf{F}+\mf{L}\otimes \mf{M})\nabla H(\mf{x}) + \bm{\phi}(\nabla H(\mf{x}))
\end{equation}
where $\mf{L} = \text{\normalfont diag}(\mf{A}\mathds{1})-\mf{A}$ is the standard Laplacian matrix of the graph $\mathcal{G}$ associated with the $0-1$ adjacency matrix $\mf{A}$. 

One of the most important features of \eqref{eq:mas:total} inherited from the PHS structure is its passivity property concerning the input-output pair $(\bm{\phi}(\nabla H(\mf{x})),\mf{y})$. 
\begin{proposition}
\label{prop:passive}
The trajectories of \eqref{eq:mas:total} satisfy
$$
\dot{H}(\mf{x})\leq \mf{y}^\top\bm{\phi}(\nabla H(\mf{x}))
$$
\end{proposition}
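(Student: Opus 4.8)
The plan is to compute $\dot H(\mf{x})$ along trajectories of \eqref{eq:mas:total} using the chain rule, and then exploit the negative semidefiniteness of the symmetric parts of $\mf{F}$ and $\mf{L}\otimes\mf{M}$ to bound the dissipative terms. First I would write $\dot H(\mf{x}) = \nabla H(\mf{x})^\top \dot{\mf{x}}$, which is valid because $H$ is differentiable (a sum of the $H_i$). Substituting the right-hand side of \eqref{eq:mas:total} gives
\begin{equation*}
\dot H(\mf{x}) = \nabla H(\mf{x})^\top(\mf{F}+\mf{L}\otimes\mf{M})\nabla H(\mf{x}) + \nabla H(\mf{x})^\top\bm{\phi}(\nabla H(\mf{x})).
\end{equation*}
The second term is exactly $\mf{y}^\top\bm{\phi}(\nabla H(\mf{x}))$ since $\mf{y}=\nabla H(\mf{x})$ by definition of the joint output, so it remains to show the first (quadratic) term is $\le 0$.

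For the quadratic term, I would use the standard fact that for any matrix $\mf{Q}$ and vector $\mf{v}$, $\mf{v}^\top\mf{Q}\mf{v} = \tfrac12\mf{v}^\top(\mf{Q}+\mf{Q}^\top)\mf{v}$. Applying this with $\mf{Q} = \mf{F}+\mf{L}\otimes\mf{M}$ and $\mf{v}=\nabla H(\mf{x})$ reduces the task to showing $(\mf{F}+\mf{L}\otimes\mf{M}) + (\mf{F}+\mf{L}\otimes\mf{M})^\top \preceq \mf{0}$. Since $\mf{F}=\text{blockdiag}(\mf{F}_1,\dots,\mf{F}_\mfs{N})$ and each $\mf{F}_i+\mf{F}_i^\top\preceq\mf{0}$, we get $\mf{F}+\mf{F}^\top\preceq\mf{0}$. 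For the coupling term, $(\mf{L}\otimes\mf{M})+(\mf{L}\otimes\mf{M})^\top = \mf{L}\otimes(\mf{M}+\mf{M}^\top)$, using symmetry of the Laplacian $\mf{L}$ of an undirected graph; since $\mf{L}\succeq\mf{0}$ and $\mf{M}+\mf{M}^\top\preceq\mf{0}$, the Kronecker product of a PSD matrix and an NSD matrix is NSD. Summing the two NSD contributions yields the claim.

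The main obstacle, such as it is, is the coupling term: one must be careful that $\mf{L}$ is symmetric (true here because $\mathcal{G}$ is undirected, so $\mf{A}=\mf{A}^\top$ and $\mf{L}=\mf{L}^\top$) and invoke the correct sign rule for Kronecker products, namely that $\mf{P}\succeq\mf{0}$ and $\mf{R}\preceq\mf{0}$ imply $\mf{P}\otimes\mf{R}\preceq\mf{0}$ — which follows from the eigenvalues of $\mf{P}\otimes\mf{R}$ being the pairwise products $\lambda_i(\mf{P})\lambda_j(\mf{R})$ together with the fact that $\mf{P}\otimes\mf{R}$ is symmetric when both factors are. I would state this Kronecker sign fact explicitly (possibly citing an appendix or standard linear-algebra reference) since it is the one nonobvious ingredient; everything else is the routine chain-rule-plus-symmetrization computation that underlies every PHS passivity argument.
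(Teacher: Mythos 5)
Your proof is correct and follows the same overall strategy as the paper: compute $\dot H = \nabla H^\top\dot{\mf{x}}$ and reduce everything to showing that the symmetric part of $\mf{F}+\mf{L}\otimes\mf{M}$ is negative semidefinite. The only place you diverge is in how you handle the coupling term: the paper factors the Laplacian as $\mf{L}=\mf{E}\mf{E}^\top$ via an incidence matrix and writes $\mf{v}^\top(\mf{E}\mf{E}^\top\otimes\mf{M})\mf{v}=\mf{w}^\top(\mf{I}\otimes\mf{M})\mf{w}\le 0$ with $\mf{w}=(\mf{E}^\top\otimes\mf{I}_m)\mf{v}$, i.e.\ a congruence argument that pushes the edge structure into the vector; you instead use symmetry of $\mf{L}$ to get $(\mf{L}\otimes\mf{M})+(\mf{L}\otimes\mf{M})^\top=\mf{L}\otimes(\mf{M}+\mf{M}^\top)$ and then the Kronecker eigenvalue sign rule ($\mf{P}\succeq\mf{0}$, $\mf{R}\preceq\mf{0}$ symmetric $\Rightarrow\mf{P}\otimes\mf{R}\preceq\mf{0}$). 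Both are valid; the paper's congruence route avoids invoking $\mf{L}\succeq\mf{0}$ as a separate fact (positive semidefiniteness is baked into the factorization $\mf{E}\mf{E}^\top$), while your version has the small merit of making explicit where undirectedness of $\mathcal{G}$ (symmetry of $\mf{L}$) is used, which the paper relies on only implicitly when writing $\mf{L}=\mf{E}\mf{E}^\top$. No gaps.
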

\begin{proof}
    Take arbitrary $\mf{v}\in\mathbb{R}^{\mfs{N}m}$ with $\mf{v}\neq \mf{0}$. Recall $\mf{M}+\mf{M}^\top\preceq \mf{0}$ and $\mf{F}+\mf{F}^\top\preceq \mf{0}$ by assumption. Define $\mf{w}=(\mf{E}^\top\otimes \mf{I}_m)\mf{v}$ where $\mf{E}$ is an incidence matrix of $\mathcal{G}$ satisfying $\mf{L}=\mf{E}\mf{E}^\top$. Hence,
    
    $$
    \begin{aligned}
    \mf{v}^\top(\mf{F}+\mf{L}\otimes\mf{M})\mf{v}&= \mf{v}^\top\mf{F}\mf{v}+ \mf{v}^\top(\mf{E}\mf{E}^\top\otimes \mf{M})\mf{v}\\
    &\leq \mf{v}^\top(\mf{E}\otimes \mf{I}_m)(\mf{I}\otimes \mf{M})(\mf{E}^\top\otimes \mf{I}_m)\mf{v}\\
    &\leq \mf{w}^\top(\mf{I}\otimes\mf{M})\mf{w}\leq 0
    \end{aligned}
    $$
    Therefore, $\mf{F}+\mf{L}\otimes\mf{M}$ has symmetric part negative semi-definite. Hence, the proposition follows by taking $\dot{H}=\nabla H^\top \dot{\mf{x}}$ along \eqref{eq:mas:total}.
\end{proof}
As discussed in Section \ref{sec:phs}, passivity can be used to construct a Lyapunov function, ensuring convergence to the equilibrium. This idea is formalized in the following.
\begin{corollary}
\label{cor:convergence}
Let Assumption \ref{as:well:posedness} and consider \eqref{eq:mas:total}. Moreover, assume that $H(\mf{x})$ is strongly convex. Finally, assume that $\bm{\phi}_i$ satisfy the incremental passivity property:
\begin{equation}
\label{eq:increasing:passivity}
(\mf{u}-\mf{v})^\top(\bm{\phi}_i(\mf{u})-\bm{\phi}_i(\mf{v}))\leq 0
\end{equation}
for arbitrary $\mf{u}, \mf{v}\in\mathbb{R}^n$. with equality only when $\mf{u}=\mf{v}$. Then, the unique equilibrium of \eqref{eq:mas:total} is globally asymptotically stable.
\end{corollary}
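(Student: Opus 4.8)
The plan is to build a strict Lyapunov function from the joint Hamiltonian $H$ via its Bregman divergence about the equilibrium, and then invoke the standard global asymptotic stability theorem. Let $\mf{x}^*=[\mf{x}_i^*]_{i\in\mathcal{I}}$ be the unique equilibrium guaranteed by part 2 of Assumption \ref{as:well:posedness}, abbreviate $\mf{y}^*=\nabla H(\mf{x}^*)$, and set
$$
V(\mf{x}) = H(\mf{x}) - H(\mf{x}^*) - (\mf{x}-\mf{x}^*)^\top\nabla H(\mf{x}^*).
$$
Since $H$ is strongly convex, this Bregman divergence obeys $V(\mf{x})\geq \tfrac{c}{2}\|\mf{x}-\mf{x}^*\|^2$ for some $c>0$ and vanishes only at $\mf{x}=\mf{x}^*$; in particular $V$ is positive definite and radially unbounded. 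I would collect this, together with the bijectivity of $\nabla H$ implied by strong convexity, as an auxiliary convexity lemma (cf.\ Appendix \ref{ap:convex}), so that it meshes cleanly with the well-posedness of $\mf{x}^*$ postulated in Assumption \ref{as:well:posedness}.

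Next I would differentiate $V$ along the trajectories of \eqref{eq:mas:total}, which are globally defined by part 1 of Assumption \ref{as:well:posedness}. Writing $\mf{y}=\nabla H(\mf{x})$ and using $\dot V=(\mf{y}-\mf{y}^*)^\top\dot{\mf{x}}$, then subtracting the equilibrium identity \eqref{eq:general:equilibrium} in its joint form $\mf{0}=(\mf{F}+\mf{L}\otimes\mf{M})\mf{y}^*+\bm{\phi}(\mf{y}^*)$ from \eqref{eq:mas:total}, one gets
$$
\dot V = (\mf{y}-\mf{y}^*)^\top(\mf{F}+\mf{L}\otimes\mf{M})(\mf{y}-\mf{y}^*) + (\mf{y}-\mf{y}^*)^\top\big(\bm{\phi}(\mf{y})-\bm{\phi}(\mf{y}^*)\big).
$$
The first term is nonpositive because $\mf{F}+\mf{L}\otimes\mf{M}$ has negative semidefinite symmetric part — precisely the matrix inequality established inside the proof of Proposition \ref{prop:passive}. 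The second term equals $\sum_{i\in\mathcal{I}}(\mf{y}_i-\mf{y}_i^*)^\top(\bm{\phi}_i(\mf{y}_i)-\bm{\phi}_i(\mf{y}_i^*))$, and each summand is nonpositive by the incremental passivity property \eqref{eq:increasing:passivity} applied with $\mf{u}=\mf{y}_i$, $\mf{v}=\mf{y}_i^*$. Hence $\dot V\leq 0$.

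Finally I would upgrade to strict decrease away from $\mf{x}^*$. If $\dot V=0$ at some $\mf{x}$, both nonpositive terms vanish; in particular each $(\mf{y}_i-\mf{y}_i^*)^\top(\bm{\phi}_i(\mf{y}_i)-\bm{\phi}_i(\mf{y}_i^*))=0$, so the strictness clause in \eqref{eq:increasing:passivity} forces $\mf{y}_i=\mf{y}_i^*$ for every $i$, i.e.\ $\nabla H(\mf{x})=\nabla H(\mf{x}^*)$; injectivity of $\nabla H$ (strong convexity of $H$) then yields $\mf{x}=\mf{x}^*$. Thus $V$ is positive definite, radially unbounded, with $\dot V<0$ for all $\mf{x}\neq\mf{x}^*$, and global asymptotic stability of $\mf{x}^*$ follows from Lyapunov's theorem (equivalently, from LaSalle's invariance principle, since $\{\dot V=0\}=\{\mf{x}^*\}$ is the only invariant set). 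The one point requiring care is the convex-analysis bookkeeping around $V$ — that the Bregman divergence is proper and that strong convexity makes $\nabla H$ a bijection — which is routine but worth isolating as a lemma; everything else reduces to combining Proposition \ref{prop:passive} with \eqref{eq:increasing:passivity}.
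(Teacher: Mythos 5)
Your proof is correct and follows essentially the same route as the paper: the Bregman divergence of $H$ about $\mf{x}^*$ as Lyapunov function, radial unboundedness from strong convexity (the paper's Lemma \ref{le:lyapunov}), subtraction of the equilibrium identity to split $\dot V$ into the semidefinite quadratic form in $\mf{S}=\mf{F}+\mf{L}\otimes\mf{M}$ plus the incremental-passivity term, and strictness via the equality clause of \eqref{eq:increasing:passivity} together with injectivity of $\nabla H$. Your treatment is, if anything, slightly more explicit than the paper's on the final step (per-agent vanishing of the passivity terms and the injectivity of $\nabla H$), but it is the same argument.
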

\begin{proof}
Let $\mf{x}^*$ denote the equilibrium of \eqref{eq:mas:total}. To simplify the notation, let $\nabla H^*=\nabla H(\mf{x}^*)$.
Set 
\begin{equation}
\label{eq:lyapunov}
    V_{\mf{x}^*}(\mf{x}) = H(\mf{x}) - H(\mf{x}^*) - \nabla H(\mf{x}^*)^\top(\mf{x}-\mf{x}^*)
\end{equation}
as a Lyapunov function candidate. Note that $V_{\mf{x}^*}(\mf{x})$ satisfy the required conditions for this purpose. First, $V_{\mf{x}^*}(\mf{x}^*) = 0$. Second, since $H$ is strongly convex, then Lemma \ref{le:lyapunov} in Appendix \ref{ap:convex} is used to conclude that $V_{\mf{x}^*}(\mf{x})$ is always positive and radially unbounded. Denote with $\mf{S}=(\mf{F}+\mf{L}\otimes \mf{M})$. Then,
    $$
    \begin{aligned}
&\dot{V}_{\mf{x}^*} 
=\left(\nabla H-\nabla H^*\right)^\top\dot{\mf{x}} \\
& = \left(\nabla H-\nabla H^*\right)^\top(\dot{\mf{x}} - \mf{S}\nabla H^* - \bm{\phi}(\nabla H^*)) \\
&=\left(\nabla H-\nabla H^*\right)^\top(\mf{S}\nabla H + \bm{\phi}(\nabla H ) - \mf{S}\nabla H^* - \bm{\phi}(\nabla H^*))\\
&=  \left(\nabla H -\nabla H^*\right)^\top \mf{S} \left(\nabla H -\nabla H^*\right)\\
&+  \left(\nabla H -\nabla H^*\right)^\top(\bm{\phi}(\nabla H )-\bm{\phi}(\nabla H^*)) \leq 0
    \end{aligned}
    $$
    where we used the fact that $\mf{S}+\mf{S}^\top\preceq \mf{0}$ from the proof of Proposition \ref{prop:passive}. Moreover, we used the incremental passivity property from \eqref{eq:increasing:passivity} in the last step. Furtheremore, note that $\dot{V}_{\mf{x}^*}=0$ only when $\nabla H(\mf{x}) = \nabla H(\mf{x}^*)$, which occurs only at the equilibrium point. Henceforth, $\dot{V}_{\mf{x}^*}<0$ for all $\mf{x}\neq \mf{x}^*$. Therefore, $\mf{x}^*$ is a globally asymptotically stable equilibrium.
\end{proof}
\subsection{Matching the equilibrium with the solution of an optimization program}
Given that convergence of the PHS system in \eqref{eq:general:MAS} is ensured by Proposition \ref{cor:convergence}, it only remains to match the equilibrium equations \eqref{eq:general:equilibrium} with the solution of the consensus optimization program in \eqref{eq:optimal2}. This can be performed by an appropriate design of the degrees of freedom we have in $\mf{F}_i, \mf{M}, \bm{\phi}_i, H_i$. To provide intuition, we start with the centralized setting.
\begin{example}[Centralized optimization]\label{ex:single}
{\normalfont
Consider \eqref{eq:optimal2} with $\mfs{N}=1$ and $f:=f_1$, $\mf{x}:=\mf{x}_1$. For an equilibrium $\mf{x}^*$ of \eqref{eq:mas:total} to coincide with $\bm{\theta^*}$, the first order optimality constraints must be met, namely,
$
\mf{0} = \nabla f(\mf{x}^*) 
$.
Comparing with the equilibrium conditions \eqref{eq:general:equilibrium}, this condition can be achieved with either one of the following choices:\\[-0.8em]
\begin{enumerate}
    \item[a)] $H_1(\mf{x}) = \mf{x}^\top\mf{x}/2$, $\mf{F}_1=\mf{M}=\mf{0}$, $\mf{\phi}_1(\mf{x})=-\nabla f(\mf{x})$.\\[-0.8em]
    \item[b)] $H_1(\mf{x}) = f(\mf{x})$, $\mf{F}_1=-\mf{I}$, $\mf{M}=\mf{0}$, $\mf{\phi}_1(\mf{x})=\mf{0}$.\\[-0.8em]
\end{enumerate}
In both cases, the same system results from \eqref{eq:general:MAS}, which coincides with the standard gradient flow,
\begin{equation}
\label{eq:grad:flow}
\dot{\mf{x}} = -\nabla f(\mf{x}).
\end{equation}
Note that strong convexity of $f$ imply that the incremental passivity property \eqref{eq:increasing:passivity} is satisfied for $\mf{\phi}_1(\mf{x})=-\nabla f(\mf{x})$ due to Proposition \ref{prop:boyd} in Appendix \ref{ap:convex}. Thus, convergence of trajectories of the standard gradient flow \eqref{eq:grad:flow} towards $\mf{x}^*$ follows from Corollary \ref{cor:convergence}.
}
\end{example}
Now, we provide an example aiming to solve \eqref{eq:optimal2}.

\begin{example}[Unconstrained distributed optimization]\label{ex:distro}
{\normalfont 
Consider \eqref{eq:optimal2} with arbitrary $\mfs{N}\in\mathbb{N}$ and connected $\mathcal{G}$. The first order optimimality conditions for the global optimum of the program \eqref{eq:optimal2} are $\mf{0}=\sum_{i\in\mathcal{I}}\nabla f_i(\bm{\theta}^*)$. However, this condition can be expressed in a more appropriate distributed fashion as:
\begin{equation}
\label{eq:conditions}
\mf{0}=\sum_{i\in\mathcal{I}}\nabla f_i(\mf{q}_i^*), \ \ \mf{q}_1^*=\dots=\mf{q}_\mfs{N}^*
\end{equation}
where we introduce auxiliary variables $\mf{q}_i^*=\bm{\theta}^*\in\mathbb{R}^{m}$. These include a condition for the gradients and a condition for consensus. Setting $\mf{q}^*=[\mf{q}_i^*]_{i\in\mathcal{I}}$, the conditions in \eqref{eq:conditions} at the equilibrium can be written as:
\begin{equation}
\label{eq:conditions2}
\mf{0}=(\mathds{1}^\top_\mfs{N}\otimes \mf{I}_m)\nabla f(\mf{q}^*),\ \  \mf{0}=(\mf{L}\otimes \mf{I}_m)\mf{q}^*
\end{equation}
which implies $\mf{q}^*=(\mathds{1}_\mfs{N}\otimes\bm{\theta}^*)$ and with $f(\mf{q}^*)=\sum_{i\in\mathcal{I}} f_i(\mf{q}_i^*)$. Having two sets of constraints motivates the addition of a second set of auxiliary variables $\mf{p}_i\in\mathbb{R}^{m}$ and partition $\mf{x}_i=[\mf{q}_i^\top,\mf{p}_i^\top]^\top\in\mathbb{R}^n$ with $n=2m$. The PHS equilibrium equations in \eqref{eq:general:equilibrium} for each partition will be used to match either one of the two conditions \eqref{eq:conditions2}. This can be performed by setting 
\begin{equation}
\label{eq:M}
\begin{aligned}
\mf{F}_i=\mf{0}_{n\times n},& \quad \mf{H}_i(\mf{x}_i)=\frac{1}{2}\mf{x}_i^\top\mf{x}_i \\
\bm{\phi}_i(\mf{x}_i)=\begin{bmatrix}-\nabla f_i(\mf{q}_i)\\ \mf{0}_m\end{bmatrix},& \quad \mf{M} = \begin{bmatrix}
-1 & -1 \\
 1 & 0
\end{bmatrix}\otimes \mf{I}_m
\end{aligned}
\end{equation}
Henceforth, from \eqref{eq:general:equilibrium}, the equilibrium for the partitioned states can be written as
\begin{equation}
\label{eq:equalibrium:example}
\begin{aligned}
\mf{0} &= -(\mf{L}\otimes \mf{I}_m)\mf{q}^* -(\mf{L}\otimes \mf{I}_m)\mf{p}^* -\nabla {f}(\mf{q}^*) \\
\mf{0} &= (\mf{L}\otimes \mf{I}_m)\mf{q}^*
\end{aligned}
\end{equation}
from which the optimally conditions in \eqref{eq:conditions2} follow by multiplying \eqref{eq:equalibrium:example} with $(\mathds{1}^\top_\mfs{N}\otimes \mf{I}_m)$ from the left and using $\mathds{1}^\top_\mfs{N}\mf{L}=\mf{0}$. The resulting system at each agent from \eqref{eq:general:MAS} is
\begin{equation}
    \label{eq:system:unconstrained}
\begin{aligned}
\dot{\mf{q}}_i &= -\sum_{i\in\mathcal{I}}(\mf{q}_i-\mf{q}_i)-\sum_{i\in\mathcal{I}}(\mf{p}_i-\mf{p}_i)-\nabla f_i(\mf{q}_i)\\
\dot{\mf{p}}_i &= \sum_{i\in\mathcal{I}}(\mf{q}_i-\mf{q}_i)
\end{aligned}
\end{equation}
}
\end{example}

\begin{remark}
Note that in all previous examples, global convergence is ensured by Corollary \ref{cor:convergence}. The only condition to be verified is existence and uniqueness of solutions required in Assumption \ref{as:well:posedness}. Fortunately, this is easily verified, since Assumption \ref{as:regular:f} for the functions $f_i$ ensure the Lipschitz property of the right-hand sides of \eqref{eq:grad:flow} and \eqref{eq:system:unconstrained}. Henceforth, existence and uniqueness of solutions follow in each case.
\end{remark}

\section{PHS-based optimization in discrete time}
\label{sec:discrete:time}
\subsection{Discrete gradient-based PHS discretization}
The continuous time system in \eqref{eq:general:MAS} can be implemented in discrete time by taking advantage of the PHS structure one more time. For instance, we make the following changes:
\begin{equation}
\label{eq:changes}
\begin{aligned}
i)&\quad \dot{\mf{x}}_i\to \frac{\mf{x}^+_i-\mf{x}_i}{\tau}\\
ii)&\quad \nabla H_i(\mf{x}_i)\to\overline{\nabla} H_i(\mf{x}_i,\mf{x}_i^+)
\end{aligned}
\end{equation}
where we denote with $\mf{x}_i=\mf{x}_i[k]$ at an arbitrary time step $k\in\mathbb{N}$ and $\mf{x}_i^+=\mf{x}_i[k+1]$ the updated state at the next time step, and the object $\overline{\nabla} H_i$ is defined later. Note that the change \eqref{eq:changes}-\textit{i)} corresponds to the finite difference approximation of the derivative $\dot{\mf{x}}_i$ between two time instants $t=k\tau$ and $t=(k+1)\tau$, where $\tau>0$ represents a step size parameter. If this was the only change to be applied, the discretization corresponds to the standard forward Euler method. For instance, applying this change to \eqref{eq:grad:flow}, leads to the standard gradient descent method
$$
\mf{x}^+ = \mf{x} - \tau\nabla f(\mf{x})
$$
from which $\tau>0$ can be interpreted as a learning rate parameter. In general, upper bound constraints need to be applied to $\tau$ to ensure sufficiently a small $\tau$ which leads to convergence.

In order to circumvent any constraint to $\tau$, we introduce \eqref{eq:changes}-\textit{ii}), such that our discretization no longer corresponds to a forward Euler step, where the symbol $\overline{\nabla}$ denotes the discrete gradient defined as follows:

\begin{definition}\cite{discretegrad}
Let $g:\mathbb{R}^n\to\mathbb{R}$ be a scalar differentiable function. Hence, $\overline{\nabla}g: \mathbb{R}^n\times \mathbb{R}^n\to\mathbb{R}$ is defined by the properties:
\begin{equation}
\label{eq:prop:dd}
    \begin{aligned}
    &\overline{\nabla} g(\mf{u},\mf{v})^\top(\mf{v}-\mf{u}) = g(\mf{u})-g(\mf{v}) \\
    &\lim_{\mf{v}\to\mf{u}}\overline{\nabla} g(\mf{u},\mf{v}) = \nabla g(\mf{u})
    \end{aligned}
\end{equation}
\end{definition}
While there are many possible ways to construct a discrete gradient, in this work, we consider 
$$
\overline{\nabla} g(\mf{u},\mf{v})=\int_0^1\nabla g((1-s)\mf{u}+s\mf{v})\text{d}s
$$
which for the case of quadratic functions $H_i(\mf{x}_i) = \mf{x}_i^\top\mf{x}_i/2$, results in the average $$\overline{\nabla} H_i(\mf{x}_i,\mf{x}_i^+) =\frac{\mf{x}_i+\mf{x}_i^+}{2}$$

With these changes, the discrete time equivalent of the PHS \eqref{eq:general:MAS} corresponds to: 
\begin{equation}
\label{eq:discrete:MAS}
\begin{aligned}
     \frac{\mf{x}^+_i-\mf{x}_i}{\tau} &= \mf{F}_i\overline{\nabla} H_i(\mf{x}_i,\mf{x}_i^+) +\bm{\phi}_i(\overline{\nabla} H_i(\mf{x}_i,\mf{x}_i^+)) \\
    &+\sum_{j\in\mathcal{N}_i}\mf{M}\left(\overline{\nabla} H_i(\mf{x}_i,\mf{x}_i^+)-\overline{\nabla} H_j(\mf{x}_j,\mf{x}_j^+)\right)
    \end{aligned}
\end{equation}
Note that the discrete time system in \eqref{eq:discrete:MAS} imposes an implicit equation of $\mf{x}_i^+$ at each time step. Hence, we require an additional regularity condition, analogous to existence and uniqueness of solutions in the second part of Assumption \ref{as:well:posedness} which, similar to \cite{Macchelli2023,discretegrad}, can be fulfilled for $\bm{\phi}_i, \overline{\nabla}H_i$ Lipschitz:
\begin{assumption}
\label{as:implicit1}
Given $\{\mf{x}_i\}_{i\in\mathcal{I}}$, there exists unique $\{\mf{x}_i^+\}_{i\in\mathcal{I}}$ complying \eqref{eq:discrete:MAS}.
\end{assumption}

As discussed before, a parameter-free algorithm refers to the property of \eqref{eq:discrete:MAS} not to compromise its stability features regardless of the step size $\tau>0$. In this sense, the advantage of \eqref{eq:discrete:MAS} is twofold. First, note that since $$\overline{\nabla} H_i(\mf{x}_i^*,\mf{x}_i^+)=\nabla H_i(\mf{x}_i^*)$$ by construction, then the equilibrium of \eqref{eq:discrete:MAS} is the same as the one from \eqref{eq:general:MAS}, namely the unique point $\mf{x}_i^*$ complying \eqref{eq:general:equilibrium}. Second, the stability proof for \eqref{eq:discrete:MAS} is practically the same as \eqref{eq:general:MAS} regardless of $\tau>0$ as is formalized in the following:
\begin{theorem}
\label{prop:discrete:convergence}
Let Assumption \ref{as:well:posedness}-2), Assumption \ref{as:implicit1} and consider \eqref{eq:discrete:MAS}. Moreover, assume that $H_i, \bm{\phi}_i$ satisfies the conditions described in Corollary \ref{cor:convergence}. Then, the unique equilibrium of \eqref{eq:discrete:MAS} is globally asymptotically stable for any $\tau>0$.
\end{theorem}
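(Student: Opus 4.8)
The plan is to transcribe, almost verbatim, the Lyapunov argument of Corollary~\ref{cor:convergence}, with the continuous derivative $\dot V_{\mathbf{x}^*}$ replaced by the one-step increment $\Delta V_{\mathbf{x}^*} := V_{\mathbf{x}^*}(\mathbf{x}^+) - V_{\mathbf{x}^*}(\mathbf{x})$ taken along \eqref{eq:discrete:MAS}, where $V_{\mathbf{x}^*}$ is the Bregman-type function \eqref{eq:lyapunov}. First I would write \eqref{eq:discrete:MAS} in compact form: setting $\overline{\nabla} H := \overline{\nabla} H(\mathbf{x},\mathbf{x}^+) = [\overline{\nabla} H_i(\mathbf{x}_i,\mathbf{x}_i^+)]_{i\in\mathcal{I}}$, $\mathbf{S} := \mathbf{F}+\mathbf{L}\otimes\mathbf{M}$ and $\bm{\phi}$ as in \eqref{eq:mas:total}, the update reads $\mathbf{x}^+-\mathbf{x} = \tau\,(\mathbf{S}\,\overline{\nabla}H + \bm{\phi}(\overline{\nabla}H))$, and $\mathbf{S}+\mathbf{S}^\top\preceq\mathbf{0}$ by the matrix computation already performed in the proof of Proposition~\ref{prop:passive}. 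Assumption~\ref{as:implicit1} makes the sequence $\{\mathbf{x}[k]\}$ well defined, while the limit property in \eqref{eq:prop:dd} gives $\overline{\nabla}H_i(\mathbf{x}_i^*,\mathbf{x}_i^*)=\nabla H_i(\mathbf{x}_i^*)$, so substituting $\mathbf{x}^+=\mathbf{x}=\mathbf{x}^*$ into the compact update recovers \eqref{eq:general:equilibrium}; hence $\mathbf{x}^*$ is a fixed point of \eqref{eq:discrete:MAS}, and it is the unique one by Assumption~\ref{as:well:posedness}-2).

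The crucial step is the discrete surrogate of the chain rule $\dot H=\nabla H^\top\dot{\mathbf{x}}$: the telescoping property of the discrete gradient in \eqref{eq:prop:dd}, applied per agent with $(\mathbf{u},\mathbf{v})=(\mathbf{x}_i,\mathbf{x}_i^+)$ and summed over $i\in\mathcal{I}$, yields $H(\mathbf{x}^+)-H(\mathbf{x}) = \overline{\nabla}H^\top(\mathbf{x}^+-\mathbf{x})$. Therefore, with $\nabla H^* := \nabla H(\mathbf{x}^*)$,
\[
\Delta V_{\mathbf{x}^*} = H(\mathbf{x}^+)-H(\mathbf{x}) - (\nabla H^*)^\top(\mathbf{x}^+-\mathbf{x}) = (\overline{\nabla}H - \nabla H^*)^\top(\mathbf{x}^+-\mathbf{x}) .
\]
Substituting the compact update and then subtracting the zero vector $\mathbf{0}=\mathbf{S}\nabla H^*+\bm{\phi}(\nabla H^*)$ (the equilibrium relation) gives
\[
\Delta V_{\mathbf{x}^*} = \tau\,(\overline{\nabla}H-\nabla H^*)^\top\mathbf{S}\,(\overline{\nabla}H-\nabla H^*) + \tau\,(\overline{\nabla}H-\nabla H^*)^\top\!\left(\bm{\phi}(\overline{\nabla}H)-\bm{\phi}(\nabla H^*)\right),
\]
which is structurally identical to the expression for $\dot V_{\mathbf{x}^*}$ in Corollary~\ref{cor:convergence}, only with $\overline{\nabla}H$ in place of $\nabla H$ and an overall factor $\tau>0$. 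The first term is $\le 0$ because $\mathbf{S}+\mathbf{S}^\top\preceq\mathbf{0}$; the second is $\le 0$ by applying the incremental passivity \eqref{eq:increasing:passivity} componentwise with $\mathbf{u}=\overline{\nabla}H_i(\mathbf{x}_i,\mathbf{x}_i^+)$ and $\mathbf{v}=\nabla H_i(\mathbf{x}_i^*)$. Since $\tau>0$ merely scales a nonpositive quantity, $\Delta V_{\mathbf{x}^*}\le 0$ for every step size, which is the key point.

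It then remains to upgrade this to a strict decrease and invoke the discrete Lyapunov theorem. If $\Delta V_{\mathbf{x}^*}=0$, the strictness clause of \eqref{eq:increasing:passivity} forces $\overline{\nabla}H_i(\mathbf{x}_i,\mathbf{x}_i^+)=\nabla H_i(\mathbf{x}_i^*)$ for all $i\in\mathcal{I}$; the compact update then yields $\mathbf{x}^+-\mathbf{x}=\tau(\mathbf{S}\nabla H^*+\bm{\phi}(\nabla H^*))=\mathbf{0}$, so $\mathbf{x}$ is a fixed point of \eqref{eq:discrete:MAS} and hence (using the limit property once more) solves \eqref{eq:general:equilibrium}, giving $\mathbf{x}=\mathbf{x}^*$ by Assumption~\ref{as:well:posedness}-2). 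Thus $\Delta V_{\mathbf{x}^*}<0$ for all $\mathbf{x}\neq\mathbf{x}^*$. Exactly as in Corollary~\ref{cor:convergence}, $H=\sum_{i\in\mathcal{I}}H_i$ is strongly convex (being a sum of strongly convex functions), so $V_{\mathbf{x}^*}$ is positive definite about $\mathbf{x}^*$, radially unbounded by Lemma~\ref{le:lyapunov}, and vanishes at $\mathbf{x}^*$; the standard Lyapunov theorem for discrete-time dynamics with a strictly decreasing, radially unbounded Lyapunov function then yields global asymptotic stability of $\mathbf{x}^*$, for every $\tau>0$.

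The main obstacle is conceptual rather than computational: one must recognize that the telescoping identity of the discrete gradient supplies the \emph{exact} equality $H(\mathbf{x}^+)-H(\mathbf{x})=\overline{\nabla}H^\top(\mathbf{x}^+-\mathbf{x})$, which is precisely what lets the continuous-time estimate be carried over with no restriction on $\tau$; a plain forward-Euler discretization would leave an $O(\tau^2)$ remainder and the conclusion would fail for large $\tau$. The only other subtlety is the equality case of $\Delta V_{\mathbf{x}^*}$, which must be closed through uniqueness of the fixed point (Assumption~\ref{as:well:posedness}-2)) rather than through a LaSalle-type invariance argument — the latter is unnecessary here since $\Delta V_{\mathbf{x}^*}$ is already strictly negative off the equilibrium.
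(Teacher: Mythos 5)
Your proposal is correct and follows essentially the same route as the paper: the Bregman-type Lyapunov function $V_{\mf{x}^*}$ from \eqref{eq:lyapunov}, the exact telescoping identity of the discrete gradient in place of the chain rule, and then a verbatim transfer of the dissipativity argument of Corollary~\ref{cor:convergence} with $\overline{\nabla}H$ substituted for $\nabla H$ and the factor $\tau>0$ harmlessly scaling a nonpositive quantity. If anything, you spell out the equality case ($\Delta V_{\mf{x}^*}=0 \Rightarrow \mf{x}^+=\mf{x} \Rightarrow \mf{x}$ solves \eqref{eq:general:equilibrium} $\Rightarrow \mf{x}=\mf{x}^*$ by Assumption~\ref{as:well:posedness}-2)) more explicitly than the paper, which delegates it to ``the same steps as Corollary~\ref{cor:convergence}.''
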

\begin{proof}
First, write the discrete time system \eqref{eq:discrete:MAS} in compact form, similarly as done before with the continuous time PHS from \eqref{eq:mas:total}:
\begin{equation}
\label{eq:mas:total:discrete}
\frac{\mf{x}^+-\mf{x}}{\tau} = (\mf{F}+\mf{L}\otimes \mf{M})\overline{\nabla} H(\mf{x},\mf{x}^+) + \bm{\phi}(\overline{\nabla} H(\mf{x},\mf{x}^+))
\end{equation}
with $\mf{x}=[\mf{x}_i]_{i\in\mathcal{I}}, \mf{x}^+=[\mf{x}_i^+]_{i\in\mathcal{I}}$. Moreover, denote $\mf{S}=(\mf{F}+\mf{L}\otimes \mf{M})$.  Similarly, as in the proof Corollary \ref{cor:convergence}, set $V_{\mf{x}^*}(\mf{x})$ defined in \eqref{eq:lyapunov} as a Lyapunov function candidate. Then, due to the definition of the discrete gradient, it follows that
    $$
    \begin{aligned}
    &V(\mf{x}^+) - V(\mf{x}) = \overline{\nabla}V(\mf{x},\mf{x}^+)^\top( {\mf{x}}^+ - {\mf{x}} ) \\
    &=\tau\overline{\nabla}V(\mf{x},\mf{x}^+)^\top\left( \mf{S}\overline{\nabla}H( \mf{x},\mf{x}^+) + \bm{\phi}\left(\overline{\nabla}H(\mf{x},\mf{x}^+)\right) \right). 
    \end{aligned}
    $$
Then, use 
$$
\overline{\nabla}V(\mf{x},\mf{x}^+) = \overline{\nabla}H(\mf{x},\mf{x}^+) - \overline{\nabla}H(\mf{x}^*,\mf{x}^*)
$$
since $\overline{\nabla}H(\mf{x}^*,\mf{x}^*)=\nabla H(\mf{x}^*)$. From this point, the proof follows the same steps as those in the proof of Corollary \ref{cor:convergence} with the discrete gradient of $H$ in place of the gradient to show that $ V(\mf{x}^+) - V(\mf{x})\leq 0$, with equality only when $\mf{x}=\mf{x}^*$, concluding the proof.
\end{proof}
The formulation presented so far from \eqref{eq:discrete:MAS} is already explicitly useful for parameter-free optimization in a non-distributed setting, as evidenced in the following example.
\begin{example}
{\normalfont
Consider Example \ref{ex:single} and apply the two design choices of $\mf{F}_1, \mf{M}, H_1, \bm{\phi}_1$ on \eqref{eq:discrete:MAS}. Hence, two possible discrete time systems for $\mf{x}:=\mf{x}_1$ are obtained:

\begin{align}
\frac{\mf{x}^+-\mf{x}}{\tau} &= -\nabla f\left(\frac{\mf{x}^++\mf{x}}{2}\right)\label{eq:flow1} \\
\frac{\mf{x}^+-\mf{x}}{\tau} &= -\overline{\nabla} f\left({\mf{x},\mf{x}^+}\right) \label{eq:flow2}
\end{align}
Both systems are ensured to converge to the correct equilibrium regardless of $\tau>0$. While \eqref{eq:flow2} was discussed in \cite{discretegrad}, requiring computing the discrete gradient of the cost function, which can be expensive, \eqref{eq:flow1} uses the usual gradient, which is more standard and readily available.
}    
\end{example}

To implement the discrete time system in \eqref{eq:discrete:MAS}, each agent $i\in\mathcal{I}$ need to solve its implicit equations for $\mf{x}_i^+$. This can be challenging since the implicit equations also depend on the neighbor values $\mf{x}_j^+$. While a central station may solve for $\{\mf{x}_i^+\}_{i\in\mathcal{I}}$ at each time step,  distributed equation solvers can be used at each time step. However, such an algorithm requires additional communication overhead. Henceforth, we propose an additional novel modification to the discretization methodology to remove the dependence on the neighbors $\{\mf{x}_i^+\}_{i\in\mathcal{I}}$ in the implicit equations at agent $i\in\mathcal{I}$, in a consensus optimization problem.

\subsection{The MID method: fully-distributed PHS in discrete time}

In the following, we show a simple modification of \eqref{eq:discrete:MAS}, which results in a fully distributed algorithm that does not require additional distributed equation solvers. 
Aiming to obtain a solution of the program in \eqref{eq:optimal2}, we set \eqref{eq:discrete:MAS} with the choice $H_i(\mf{x}_i) = \mf{x}_i^\top\mf{x}_i/2, \mf{F}_i=\mf{0}$ which coincide with the design in \eqref{eq:system:unconstrained}. These choices result in:
\begin{equation}
\label{eq:discrete:MAS:special}
\begin{aligned}
     \frac{\mf{x}^+_i-\mf{x}_i}{\tau} & =\sum_{j\in\mathcal{N}_i}\mf{M}\left(\frac{\mf{x}_i+\mf{x}_i^+}{2} -\frac{\mf{x}_j+\mf{x}_j^+}{2} \right)+\bm{\phi}_i\left(\frac{\mf{x}_i+\mf{x}_i^+}{2}\right) 
    \end{aligned}
\end{equation}
We remove the occurrence of $\mf{x}_j^+$ in the previous equation by making the following modification, which we refer to as the Mixed Implicit Discretization (MID):
\begin{equation}
\label{eq:discrete:MAS:MID}
\begin{aligned}
     \frac{\mf{x}^+_i-\mf{x}_i}{\tau} &=\sum_{j\in\mathcal{N}_i}\mf{M}\left(\mf{x}_i^+ -\mf{x}_j\right)+\bm{\phi}_i\left(\frac{\mf{x}_i+\mf{x}_i^+}{2}\right),
    \end{aligned}
\end{equation}
where the disagreement difference is computed with the future state for agent $i$ and the present state for agent $j$. This modification allows each agent to locally solve \eqref{eq:discrete:MAS:MID} without requiring further communication between its neighbors besides sharing $\mf{x}_i$ once at each time step. It is important to note that this modification has the same equilibrium point as \eqref{eq:discrete:MAS:special}. Henceforth, MID applied to the continuous time PHS in \eqref{eq:system:unconstrained} results in:
    \begin{equation}
    \label{eq:main:algorithm}
    \begin{aligned}
    \frac{\mf{q}_i^+-\mf{q}_i}{\tau} &= -\sum_{j\in\mathcal{N}_i}(\mf{q}_i^+-\mf{q}_j+\mf{p}_i^+-\mf{p}_j) - \nabla f_i\left(\frac{\mf{q}_i^++\mf{q}_i}{2}\right)\\
    \frac{\mf{p}_i^+-\mf{p}_i}{\tau} &= \sum_{j\in\mathcal{N}_i}(\mf{q}_i^+-\mf{q}_j)\\
    \end{aligned}
    \end{equation}

Some important features of \eqref{eq:main:algorithm} are described in the following result.

\begin{theorem}
\label{th:main}
Let Assumptions \ref{as:regular:f} and \ref{as:well:posedness}-2) hold and consider \eqref{eq:main:algorithm}. Let the matrix definitions for $\bm{\Lambda}(\tau), \mf{S}_r(\tau)$ in \eqref{eq:matrices} and \eqref{eq:S:gamma} placed in Appendix \ref{ap:proof} for readability. Then, given $\tau>0$, the unique equilibrium of \eqref{eq:main:algorithm} is globally asymptotically stable if the following Linear Matrix Inequality (LMI) is satisfied:
\begin{subequations}
\label{eq:lmi}
\begin{align}
    &\mf{P}(\tau) := \begin{bmatrix}
    \bm{\Lambda}(\tau) & \mf{P}_{12}\\
    \mf{P}_{12}^\top & \mf{P}_{22}
    \end{bmatrix}\succ \mf{0}\label{eq:lmi:P}\\
    &\begin{bmatrix}
    \mf{U} & \mf{P}_{12} \\
    \mf{P}_{12}^\top & \mf{I}
    \end{bmatrix}\succeq \mf{0}, \quad u>0\label{eq:lmi:U}\\
    &\mf{P}(\tau)\mf{S}_r(\tau)+\mf{S}_r(\tau)^\top\mf{P}(\tau)+\mf{S}_\gamma(\tau, \varepsilon)\preceq -u\begin{bmatrix}
        \mf{I}, \mf{0}\\
        \mf{0}, \mf{0}
    \end{bmatrix}\label{eq:lmi:S}
    \end{align}
\end{subequations}
for some $\varepsilon\geq 0$ and decision variables $\mf{U}, \mf{P}_{12}, \mf{P}_{22}\in\mathbb{R}^{\mfs{N}m\times \mfs{N}m}, u\in\mathbb{R}$.
\end{theorem}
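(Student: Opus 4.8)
The plan is to recast \eqref{eq:main:algorithm} as an implicit linear-parameter-varying (LPV) system in error coordinates and to certify a quadratic Lyapunov decrease from feasibility of \eqref{eq:lmi}. First I would stack the agent states into $\mf{q}=[\mf{q}_i]_{i\in\mathcal{I}}$, $\mf{p}=[\mf{p}_i]_{i\in\mathcal{I}}$ and rewrite \eqref{eq:main:algorithm} compactly using the Laplacian, degree and adjacency matrices of $\mathcal{G}$, then translate to $\tilde{\mf{q}}=\mf{q}-\mf{q}^*$, $\tilde{\mf{p}}=\mf{p}-\mf{p}^*$, where $(\mf{q}^*,\mf{p}^*)$ is the equilibrium guaranteed by Assumption \ref{as:well:posedness}-2) and satisfies $(\mf{L}\otimes\mf{I}_m)\mf{q}^*=\mf{0}$ and $(\mf{L}\otimes\mf{I}_m)\mf{p}^*=-\nabla f(\mf{q}^*)$ as in \eqref{eq:equalibrium:example} (so the affine terms generated by the equilibrium cancel against the cost gradients evaluated at $\mf{q}^*$). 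A short preliminary shows the implicit MID update is well posed under Assumption \ref{as:regular:f} alone: eliminating $\mf{p}_i^+$ through the second line of \eqref{eq:main:algorithm} turns the first into an equation of the form $(\text{symmetric positive definite map})\,\mf{a}_i+\nabla f_i(\mf{a}_i)=\text{const}$ for $\mf{a}_i=\tfrac12(\mf{q}_i^++\mf{q}_i)$, i.e.\ the gradient of a strongly convex function, hence a global bijection; thus the analogue of Assumption \ref{as:implicit1} need not be imposed for \eqref{eq:main:algorithm}.

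The key structural step is to absorb the cost gradients into a bounded matrix parameter. By the integral form of the mean value theorem, $\nabla f_i\!\big(\tfrac{\mf{q}_i+\mf{q}_i^+}{2}\big)-\nabla f_i(\mf{q}_i^*)=\mf{G}_i\,\tfrac12(\tilde{\mf{q}}_i+\tilde{\mf{q}}_i^+)$ with $\mf{G}_i=\int_0^1\nabla^2 f_i\big(\mf{q}_i^*+s(\tfrac{\mf{q}_i+\mf{q}_i^+}{2}-\mf{q}_i^*)\big)\,\mathrm{d}s$ symmetric and $\mu\mf{I}_m\preceq\mf{G}_i\preceq L\mf{I}_m$ by Assumption \ref{as:regular:f}. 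Substituting, the error dynamics take the descriptor form $\mf{E}(\mf{G},\tau)\,(\tilde{\mf{q}}^+,\tilde{\mf{p}}^+)=\mf{A}(\mf{G},\tau)\,(\tilde{\mf{q}},\tilde{\mf{p}})$ with $\mf{G}=\mathrm{blockdiag}(\mf{G}_i)$, where $\mf{E}(\mf{G},\tau)$ has positive definite symmetric part for every admissible $\mf{G}$ and every $\tau>0$, hence is invertible. This is precisely where \eqref{eq:main:algorithm} departs from the exact discrete-gradient scheme of Theorem \ref{prop:discrete:convergence}: the midpoint gradient $\nabla f_i(\tfrac{\mf{q}_i+\mf{q}_i^+}{2})$ is a discrete gradient of $f_i$ only when $f_i$ is quadratic, so the telescoping identity that made that proof $\tau$-independent is unavailable, and the price is a condition that certifies the decrease \emph{robustly} over the whole sector $\mu\mf{I}\preceq\mf{G}\preceq L\mf{I}$.

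Next I would take as Lyapunov candidate a quadratic form $V(\tilde{\mf{q}},\tilde{\mf{p}})$ with weight $\mf{P}(\tau)$ as in \eqref{eq:lmi:P}, whose $(1,1)$ block $\bm{\Lambda}(\tau)$ (defined from $\mu,L,\tau$ in \eqref{eq:matrices}) plays the role of a quadratic surrogate for the Bregman-type term $\sum_i\big(f_i(\mf{q}_i)-f_i(\mf{q}_i^*)-\nabla f_i(\mf{q}_i^*)^\top\tilde{\mf{q}}_i\big)$; positivity and radial unboundedness of $V$ then follow from \eqref{eq:lmi:P} together with strong convexity. I would compute $\Delta V=V^+-V$ along the LPV error system, substitute the invertible implicit update, and organize the outcome as a quadratic form in $(\tilde{\mf{q}},\tilde{\mf{p}})$; after a Schur complement that linearizes the contribution quadratic in the closed-loop matrix — which is where the slack $\mf{U}$ and condition \eqref{eq:lmi:U} enter — and an S-procedure that adds a nonnegative multiple $\varepsilon\ge 0$ of the sector inequality $(\mf{G}-\mu\mf{I})(L\mf{I}-\mf{G})\succeq\mf{0}$, producing exactly the term $\mf{S}_\gamma(\tau,\varepsilon)$ of \eqref{eq:S:gamma}, the decrease requirement collapses to $\mf{P}(\tau)\mf{S}_r(\tau)+\mf{S}_r(\tau)^\top\mf{P}(\tau)+\mf{S}_\gamma(\tau,\varepsilon)\preceq -u\,[\mf{I},\mf{0};\mf{0},\mf{0}]$, i.e.\ \eqref{eq:lmi:S}, yielding $\Delta V\le -u\,\|\tilde{\mf{q}}\|^2\le 0$. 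The bookkeeping in this step — carrying $\mf{E}(\mf{G},\tau)^{-1}$ and the parameter $\mf{G}$ through $\Delta V$ and re-expressing the resulting $\mf{G}$-affine matrix inequality in exactly the form \eqref{eq:lmi} with the specific matrices $\bm{\Lambda}(\tau),\mf{S}_r(\tau),\mf{S}_\gamma(\tau,\varepsilon)$ of \eqref{eq:matrices}--\eqref{eq:S:gamma} — is the technical heart and the main obstacle I anticipate.

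Finally, since $V$ is positive definite, radially unbounded and nonincreasing, its sublevel sets are compact and invariant, so trajectories are bounded and LaSalle's invariance principle applies. On the largest invariant subset of $\{\tilde{\mf{q}}=\mf{0}\}$ the second line of \eqref{eq:main:algorithm} forces $\mf{p}^+=\mf{p}$ and the first forces $(\mf{L}\otimes\mf{I}_m)\tilde{\mf{p}}=\mf{0}$, so by uniqueness of the equilibrium (Assumption \ref{as:well:posedness}-2)) this set is the singleton $\{(\mf{0},\mf{0})\}$; hence the equilibrium of \eqref{eq:main:algorithm} is globally asymptotically stable for the given $\tau>0$. By comparison with the LMI derivation, the well-posedness remark and this invariance argument are routine.
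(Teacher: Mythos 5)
Your skeleton matches the paper's proof: a well-posedness lemma obtained by eliminating $\mf{p}_i^+$ and recognizing the gradient of a strongly convex function (this part is essentially identical to the paper's argument, so Assumption \ref{as:implicit1} is indeed not needed), a stacked rewriting, a quadratic Lyapunov function with weight $\mf{P}(\tau)$, and an invariance argument at the end. The gap is in the central step you flag as the ``technical heart,'' and it is real on two counts. First, you never perform the change of variables $\mf{r}=\mf{p}-\mf{Q}\mf{q}$ with $\mf{Q}=(\mf{D}+\mf{A})\otimes\mf{I}_m/2$. The paper writes the update in midpoint/difference form, inverts the resulting \emph{constant} block matrix (keeping the nonlinearity as $\bm{\psi}(\overline{\mf{q}})=\nabla f(\overline{\mf{q}})-\nabla f(\mf{q}^*)$ rather than absorbing it into a state-dependent descriptor matrix), and then applies $\mf{T}=\bigl[\begin{smallmatrix}\mf{I}&\mf{0}\\-\mf{Q}/\tau&\mf{I}\end{smallmatrix}\bigr]$ precisely so that the input matrix becomes $\mf{B}(\tau)=[\bm{\Lambda}(\tau)^{-1}/\tau;\,\mf{0}]$ with a zero second block. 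That zero block, together with fixing the $(1,1)$ block of $\mf{P}(\tau)$ to be $\bm{\Lambda}(\tau)$, is what makes the first component of $\overline{\mf{e}}^\top\mf{P}(\tau)\mf{B}(\tau)\bm{\psi}(\overline{\mf{q}})$ exactly the monotonicity pairing $(\overline{\mf{q}}-\mf{q}^*)^\top\bm{\psi}(\overline{\mf{q}})/\tau$. In $(\tilde{\mf{q}},\tilde{\mf{p}})$ coordinates the dynamics are governed by $\mf{S}_p(\tau)$, not $\mf{S}_r(\tau)$, and the quadratic form does not collapse to \eqref{eq:lmi:S} as stated.

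Second, your mechanism for the nonlinearity does not yield the stated $\mf{S}_\gamma(\tau,\varepsilon)$. You propose the LPV embedding $\bm{\psi}=\mf{G}(\overline{\mf{q}}-\mf{q}^*)$ with $\mu\mf{I}\preceq\mf{G}\preceq L\mf{I}$ and an S-procedure on the matrix sector inequality; besides needing (a.e.) Hessians that Assumption \ref{as:regular:f} does not grant directly, carrying $\mf{E}(\mf{G},\tau)^{-1}$ through $\Delta V$ produces a non-affine dependence on $\mf{G}$, so the reduction to a single LMI is not automatic and, if completed, would certify a \emph{different} inequality than \eqref{eq:lmi}. The paper instead bounds the two components of the cross term separately: strong convexity gives $(\overline{\mf{q}}-\mf{q}^*)^\top\bm{\psi}(\overline{\mf{q}})\geq\mu\|\overline{\mf{q}}-\mf{q}^*\|^2$, the Lipschitz bound $\|\bm{\psi}(\overline{\mf{q}})\|\leq L\|\overline{\mf{q}}-\mf{q}^*\|$ plus Cauchy--Schwarz controls the $\overline{\mf{r}}$-component, Young's inequality with parameter $\varepsilon$ splits the resulting cross term, and the Schur complement of \eqref{eq:lmi:U} majorizes $\mf{P}_{12}^\top\mf{P}_{12}\preceq\mf{U}$; this is where $\gamma(\tau)=(L/\tau)\|\bm{\Lambda}(\tau)\|$ and the block-diagonal structure of $\mf{S}_\gamma(\tau,\varepsilon)$ in \eqref{eq:S:gamma} come from. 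Since the theorem asserts stability under that specific LMI, identifying your certificate with \eqref{eq:lmi} is asserted but not established. Your concluding invariance argument and the observation that the midpoint gradient is a discrete gradient only for quadratic $f_i$ are both correct.
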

The proof for Theorem \ref{th:main} is given in Appendix \ref{ap:proof}.
\begin{remark}
Note that Theorem \ref{th:main} does not rely on an assumption similar to Assumption \ref{as:implicit1}. Instead, we show the existence of a unique solution to the implicit equation in \eqref{eq:main:algorithm} directly in the proofs.
\end{remark}
The following are some consequences of Theorem \ref{th:main}. While LMI formulations are usually conservative, we provide a convergence result that holds regardless of the value of $\tau>0$.
\begin{corollary}
\label{cor:special}
    Let the conditions of Theorem \ref{th:main} hold. Denote with $\mf{x}^*=[\mf{x}_i]_{i\in\mathcal{I}}$ the the unique equilibrium of \eqref{eq:main:algorithm}. Let $\mf{A}$ be the $0-1$ adjacency matrix of $\mathcal{G}$ and $\mf{D}=\text{diag}(\mf{A}\mathds{1})$ denote the degree matrix. Then, if
    \begin{equation}
    \label{eq:special}
        \mf{D}^2-\mf{A}^2\succeq \mf{0}
    \end{equation}
    it follows that $\mf{x}^*$ is globally asymptotically stable for all $\tau>0$. In particular, \eqref{eq:special} holds if $\mathcal{G}$ is a cycle or a fully connected graph of arbitrary size. 
\end{corollary}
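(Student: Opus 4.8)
The plan is to derive the corollary directly from Theorem \ref{th:main}: it suffices to show that, for every fixed $\tau>0$, the LMI system \eqref{eq:lmi} admits a feasible tuple $(\mf{U},\mf{P}_{12},\mf{P}_{22},u,\varepsilon)$ whenever \eqref{eq:special} holds, and then to check \eqref{eq:special} for the two named graph families. So the whole statement reduces to an LMI feasibility claim plus two short spectral computations.

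For the feasibility part I would first isolate why \eqref{eq:special} is the correct hypothesis. Writing \eqref{eq:main:algorithm} in compact form, the $\mf{q}$-subdynamics carry the mixed term $\mf{D}\mf{q}^+-\mf{A}\mf{q}$ (future state weighted by the degree matrix, present state by the adjacency matrix), so a quadratic decrease along the consensus part rests on the identity
$$(\mf{I}+\tau\mf{D})^\top(\mf{I}+\tau\mf{D}) - (\mf{I}+\tau\mf{A})^\top(\mf{I}+\tau\mf{A}) = 2\tau\mf{L} + \tau^2(\mf{D}^2-\mf{A}^2),$$
with $\mf{L}=\mf{D}-\mf{A}$. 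Since $\mf{L}\succeq\mf{0}$ always, the right-hand side is positive semidefinite for every $\tau>0$ precisely when $\mf{D}^2-\mf{A}^2\succeq\mf{0}$, whereas a negative eigenvalue of $\mf{D}^2-\mf{A}^2$ would eventually be amplified by the $\tau^2$ term; this is exactly the mechanism by which \eqref{eq:special} acts on the $(1,1)$-block of \eqref{eq:lmi:S}.

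Concretely, I would choose the decision variables in a structured, essentially $\tau$-independent way: take $\mf{P}_{12}$ to be a (small) multiple of $\mf{L}\otimes\mf{I}_m$ — the state/integrator cross term needed for strict decrease along the $\mf{p}$-direction, just as in the analysis of PI-type gradient schemes — set $\mf{P}_{22}$ to a suitable positive multiple of $\mf{I}$, pick $\mf{U}$ satisfying \eqref{eq:lmi:U} through the Schur complement ($\mf{U}\succeq\mf{P}_{12}\mf{P}_{12}^\top$), and fix the S-procedure slack $\varepsilon\ge0$ and the scalar $u>0$ in terms of the constants $\mu,L$ of Assumption \ref{as:regular:f}. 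Then \eqref{eq:lmi:P} follows because $\bm{\Lambda}(\tau)$, which collects the $(\mf{I}+\tau\mf{D})$-type blocks of the implicit left-hand side of \eqref{eq:main:algorithm}, is positive definite for all $\tau>0$ and the remaining Schur complement is dominated by $\mf{P}_{22}$; \eqref{eq:lmi:U} is immediate; and for \eqref{eq:lmi:S} I would expand $\mf{P}(\tau)\mf{S}_r(\tau)+\mf{S}_r(\tau)^\top\mf{P}(\tau)+\mf{S}_\gamma(\tau,\varepsilon)$ block by block, using the identity above to collapse the $(1,1)$-block (after a congruence) to $-2\tau\mf{L}-\tau^2(\mf{D}^2-\mf{A}^2)$ plus the incrementally passive contribution of $\bm{\phi}_i=-\nabla f_i$ (negative semidefinite by strong convexity and Proposition \ref{prop:boyd}), which is $\preceq\mf{0}$ under \eqref{eq:special}, while the strict margin $-u\,\mathrm{diag}(\mf{I},\mf{0})$ is extracted from $\mu>0$ acting through the gradient term and the off-diagonal coupling blocks are absorbed by the choice of $\mf{P}_{12},\mf{P}_{22},\varepsilon$.

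The main obstacle is this last verification: one must track the exact block structure of $\mf{S}_r(\tau)$ and $\mf{S}_\gamma(\tau,\varepsilon)$ from \eqref{eq:matrices}–\eqref{eq:S:gamma}, confirm that the degree/adjacency combination in the $(1,1)$-block really is a congruence transform of $2\tau\mf{L}+\tau^2(\mf{D}^2-\mf{A}^2)$ rather than some other matrix, and ensure that the required strictness (the $u>0$ margin) persists even when $\mf{D}^2-\mf{A}^2$ is only semidefinite — along the consensus direction $\mathds{1}$, where it must come entirely from $\mu>0$ through the gradient, and along the integrator direction, where it must come from $\mf{P}_{12}$. The two concrete cases are then routine: for the complete graph on $\mfs{N}$ vertices, $\mf{A}=\mathds{1}\mathds{1}^\top-\mf{I}$ and $\mf{D}=(\mfs{N}-1)\mf{I}$ give $\mf{D}^2-\mf{A}^2=(\mfs{N}-2)(\mfs{N}\mf{I}-\mathds{1}\mathds{1}^\top)\succeq\mf{0}$; for the cycle, $\mf{D}=2\mf{I}$ and $\mf{A}$ is circulant with eigenvalues $2\cos(2\pi k/\mfs{N})$, so $\mf{D}^2-\mf{A}^2$ has eigenvalues $4\sin^2(2\pi k/\mfs{N})\ge0$, hence $\mf{D}^2-\mf{A}^2\succeq\mf{0}$ and Theorem \ref{th:main} applies for every $\tau>0$.
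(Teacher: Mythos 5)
Your high-level strategy --- exhibit a feasible point of the LMI \eqref{eq:lmi} for every $\tau>0$ under \eqref{eq:special}, then check \eqref{eq:special} for the two graph families --- is the same as the paper's, and your two spectral computations are correct (for the complete graph, $\mf{D}^2-\mf{A}^2=(\mfs{N}-2)(\mfs{N}\mf{I}-\mathds{1}\mathds{1}^\top)$; for the cycle, the circulant eigenvalues $4\sin^2(2\pi k/\mfs{N})$ give a clean alternative to the paper's diagonal-dominance argument). The genuine gap is that the feasibility verification, which you yourself flag as ``the main obstacle,'' is left as a sketch built on an ansatz that is both unnecessary and harder to close. You propose taking $\mf{P}_{12}$ a nonzero multiple of $\mf{L}\otimes\mf{I}_m$ with $\varepsilon>0$, on the grounds that strict decrease is needed along the integrator direction; but the right-hand side of \eqref{eq:lmi:S} is $-u\,\mathrm{diag}(\mf{I},\mf{0})$, i.e.\ strictness is demanded only in the $\mf{q}$-block (the $\mf{r}$-dynamics are handled by the invariance argument at the end of the proof of Theorem \ref{th:main}). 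Taking $\mf{P}_{12}\neq\mf{0}$ and $\varepsilon>0$ injects the positive terms $\varepsilon\gamma(\tau)\mf{I}/2$ and $\gamma(\tau)\mf{U}/(2\varepsilon)$ into $\mf{S}_\gamma(\tau,\varepsilon)$, and you do not show how these are absorbed uniformly in $\tau$; note in particular that the $(2,2)$-block of $\mf{P}(\tau)\mf{S}_r(\tau)+\mf{S}_r(\tau)^\top\mf{P}(\tau)$ vanishes on the consensus direction, so it cannot dominate a generic $\mf{U}\succeq\mf{P}_{12}^\top\mf{P}_{12}$ there.

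The paper closes this step with the trivial choice $\mf{P}_{12}=\mf{U}=\mf{0}$, $\mf{P}_{22}=\mf{I}/\tau$, $\varepsilon=0$, $u=\mu$: all cross terms and the Young-inequality contribution disappear, and \eqref{eq:lmi:S} collapses to the single block condition $-\tfrac{1}{\tau}\mf{L}_m-(\mf{Q}\mf{L}_m+\mf{L}_m\mf{Q})-\mu\mf{I}\preceq-\mu\mf{I}$. The algebraic fact that makes \eqref{eq:special} the right hypothesis is then the exact identity $\mf{Q}\mf{L}_m+\mf{L}_m\mf{Q}=(\mf{D}^2-\mf{A}^2)\otimes\mf{I}_m$ with $\mf{Q}=\tfrac12(\mf{D}+\mf{A})\otimes\mf{I}_m$ from \eqref{eq:matrices}. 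Your surrogate identity $(\mf{I}+\tau\mf{D})^2-(\mf{I}+\tau\mf{A})^2=2\tau\mf{L}+\tau^2(\mf{D}^2-\mf{A}^2)$ is true and captures the same mechanism, but it is not the quantity that actually appears in $\mf{S}_r(\tau)$, and you never derive the connection. As written, the proposal identifies the correct destination but does not complete the one step the corollary actually requires.
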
 
From this point, the proofs for all Corollaries are given in Appendix \ref{ap:cor}.

In the following, we show that even if the graph does not comply with \eqref{eq:special}, one can find a guaranteed bound for the time step $\tau>0$, which holds for any connected undirected graphs.
\begin{corollary}
\label{cor:bound}
    Let the conditions of Theorem \ref{th:main} hold. Denote with $\mf{x}^*=[\mf{x}_i]_{i\in\mathcal{I}}$ the unique equilibrium of \eqref{eq:main:algorithm}.  If $$
    \tau<\frac{\mu}{\|\mf{D}^2-\mf{A}^2\|}
    $$
    then $\mf{x}^*$ is globally asymptotically stable for all connected undirected graphs $\mathcal{G}$. 
\end{corollary}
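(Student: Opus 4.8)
The plan is to prove Corollary~\ref{cor:bound} by exhibiting a feasible point of the LMI system~\eqref{eq:lmi} whenever $\tau\|\mf{D}^2-\mf{A}^2\|<\mu$, so that global asymptotic stability follows at once from Theorem~\ref{th:main}. Concretely, I would reuse the very same candidate decision variables $\mf{U},\mf{P}_{12},\mf{P}_{22},u$ (and the same choice of $\varepsilon$) that certify feasibility in the proof of Corollary~\ref{cor:special}; the only structural difference is that the symmetric matrix $\mf{D}^2-\mf{A}^2$, which is \emph{assumed} positive semidefinite there, is now merely symmetric with operator norm $\|\mf{D}^2-\mf{A}^2\|$.

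First, I would recall from Appendix~\ref{ap:proof} the explicit expressions of $\bm{\Lambda}(\tau)$, $\mf{S}_r(\tau)$ and $\mf{S}_\gamma(\tau,\varepsilon)$ and substitute the candidate $\mf{P}(\tau)$ into~\eqref{eq:lmi}. With the chosen variables, inequalities~\eqref{eq:lmi:P} and~\eqref{eq:lmi:U} do not involve the sign of $\mf{D}^2-\mf{A}^2$ and hold verbatim as in the proof of Corollary~\ref{cor:special}; hence the only inequality that must be re-examined is~\eqref{eq:lmi:S}.

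Second, I would show that, after this substitution and after using the $\mu$-strong convexity of each $f_i$ (which, through the incremental passivity inequality~\eqref{eq:increasing:passivity} applied to $\bm{\phi}_i$, contributes a negative-definite margin of the form $-\mu\,\mathrm{blkdiag}(\mf{I},\mf{0})$ on the $\mf{q}$-block), the left-hand side of~\eqref{eq:lmi:S} collapses, on the relevant block, to a term of the form $\tau(\mf{D}^2-\mf{A}^2)-\mu\mf{I}$, all remaining $\tau$-dependence being concentrated in terms that are negative semidefinite for every $\tau>0$ — exactly the terms already handled when establishing Corollary~\ref{cor:special}. Invoking $\tau(\mf{D}^2-\mf{A}^2)\preceq \tau\|\mf{D}^2-\mf{A}^2\|\,\mf{I}\prec \mu\mf{I}$, this block is negative definite; choosing $u>0$ small enough absorbs the slack $-u\,\mathrm{blkdiag}(\mf{I},\mf{0})$ while keeping~\eqref{eq:lmi:U} valid. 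Since no property of $\mathcal{G}$ beyond connectedness (already used to define $\mf{L}=\mf{D}-\mf{A}$ and to set up Theorem~\ref{th:main}) enters, the conclusion holds for all connected undirected graphs.

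The main obstacle will be the algebraic bookkeeping in the second step: isolating, inside $\mf{P}(\tau)\mf{S}_r(\tau)+\mf{S}_r(\tau)^\top\mf{P}(\tau)+\mf{S}_\gamma(\tau,\varepsilon)$, precisely the indefinite term $\tau(\mf{D}^2-\mf{A}^2)$, verifying that every other $\tau$-dependent contribution is negative semidefinite for all $\tau>0$, and tracking the role of $\varepsilon$ so that the strong-convexity margin that survives is exactly $\mu$ (and not a smaller constant). Once the reduction ``\eqref{eq:lmi:S} holds $\iff \tau(\mf{D}^2-\mf{A}^2)\prec\mu\mf{I}$ on the pertinent block'' is in place, the bound $\tau<\mu/\|\mf{D}^2-\mf{A}^2\|$ makes the argument immediate via Theorem~\ref{th:main}.
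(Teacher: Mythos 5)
Your proposal is correct and follows essentially the same route as the paper: it reuses the feasible point $\mf{P}_{12}=\mf{U}=\mf{0}$, $\mf{P}_{22}=\mf{I}/\tau$ from Corollary~\ref{cor:special}, observes that only \eqref{eq:lmi:S} needs rechecking, and replaces the positive-semidefiniteness of $\mf{D}^2-\mf{A}^2$ by the operator-norm bound $\pm\tau(\mf{D}^2-\mf{A}^2)\preceq\tau\|\mf{D}^2-\mf{A}^2\|\mf{I}\prec\mu\mf{I}$, absorbed by the strong-convexity margin. The only cosmetic slip is attributing that margin to the incremental passivity of $\bm{\phi}_i$ rather than, as in the paper's proof of Theorem~\ref{th:main}, directly to Proposition~\ref{prop:boyd} via the term $-(\mu\mfs{N}/\tau)\|\overline{\mf{q}}-\mf{q}^*\|^2$; this does not affect the argument.
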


\section{Numerical examples}
\label{sec:simulations}
In this section, we test the MID-based algorithm in \eqref{eq:main:algorithm} in several scenarios and compare it with other works in the literature. For the implementation, we used the function \texttt{fsolve} in MATLAB so that each agent can solve its local implicit equation at each time step. 

We start by testing the proposal's performance in a practical data analytics problem, similar to that in \cite[Section 6]{guido2023}. Each agent in $i\in\mathcal{I}$ is equipped with $d_i\in\mathbb{N}$ points $\mf{m}_{i,1},\dots,\mf{m}_{i,d_i}\in\mathbb{R}^{m-1}$ with binary labels $l_{i,1},\dots,l_{i,d_i}\in\{-1,1\}$. The cost function is then of the form \eqref{eq:optimal2} with
$$
f_i(\bm{\theta})=\sum_{\mu=1}^{d_i}\log(1+\exp(-l_{i,\mu}(\tilde{\bm{\theta}}^\top\mf{m}_{i,\mu}+\theta)))+\frac{C\|\bm{\theta}\|^2}{2\mfs{N}}
$$
where $\theta\in\mathbb{R}$ and $\bm{\theta}=[\tilde{\bm{\theta}}^\top,\theta]^\top$ is the optimization variable, and $C>0$ is a regularization parameter, making $f_i(\bm{\theta})$ strongly convex. For this example, we picked $\mfs{N}=10$ agents, connected in an Erd\H{o}s-Réyni graph $\mathcal{G}$ with parameter 0.4 \cite{guido2023}. Moreover, we set $m=3, d_i=10$ for all agents and $C=0.1$, with randomly chosen $\{\mf{m}_{i,\mu}\}_{\mu=1}^{d_i}$. In addition, initial conditions for the state at each agent were chosen randomly.

We start by comparing the convergence speed between MID and explicit forward Euler discretization by performing several simulations with $200$ values of $\tau$ uniformly distributed in the interval $[0,10]$. The figure of merit for the convergence speed is computed as the number of iterations $K_B\in\mathbb{N}$ required such that the condition $\|\mf{q}[k]-(\mathds{1}\otimes\mf{I}_m)\bm{\theta}^*\|\leq B$ is complied from that point forward. Here, we choose $B=10^{-6}$ as an accuracy bound representing when the error between the system output and true optimal are sufficiently close for illustrative purposes. The results are shown in Figure \ref{fig:speed}, where it is observed that both methods achieve similar convergence speed for small $\tau$. However, near $\tau\approx 0.77$, Euler discretization becomes unstable, while the sampling step can be increased further for MID. This allows us to decrease $K_B$ for our method, reaching a minimum near $\tau\approx 4$. From this point, $K_B$ increases with $\tau$. This shows that allowing bigger values of $\tau$ can increase convergence speed compared to classical discretization methods, for which increasing $\tau$ leads to divergence.

Moreover, we compare with other methods in the literature for consensus distributed optimization. In particular, for discrete time, we compare with the Discrete Time Gradient Tracking (DTGT) \cite{Notarstefano2019} and the DIGing method \cite{nedic2017achieving}. In both cases, a step size parameter $\tau$ is used, which needs to be sufficiently small. Moreover, we compare with Euler discretization of continuous time approaches such as the Continuous Time  Gradient Tracking (CTGT)\cite{guido2023}, and the coordination algorithm (COOR) from \cite{kia2015distributed}. In both cases, we assume communication at each time step for fairness in all methods. Additionally, the time step for these methods is assigned to $\tau$. Finally, we also compare with Euler discretization of the PHS system in \eqref{eq:general:MAS}.

For comparison, we used quadratic cost functions, which allows us to verify a less conservative LMI for any graph, as stated in the following result:

\begin{corollary}
\label{cor:linear}
    Let the conditions of Theorem \ref{th:main} hold. Denote with $\mf{x}^*=[\mf{x}_i]_{i\in\mathcal{I}}$ the unique equilibrium of \eqref{eq:main:algorithm}.  Given $\tau>0, \mathcal{G}$, as well as $f_i(\mf{x}_i)=\mf{x}_i^\top\mf{H}_i\mf{x}_i/2+\mf{b}^\top_i\mf{x}_i$, $\mf{x}^*$ is globally asymptotically stable if the LMI comprised by \eqref{eq:lmi:P}, $u>0$ and
    \begin{equation}
    \label{eq:lmi:linear}
    \mf{P}(\tau)\mf{S}_r(\tau)+\mf{S}_r(\tau)^\top\mf{P}(\tau)+\mf{S}_h(\tau)
    \preceq -u\begin{bmatrix}
        \mf{I}, \mf{0}\\
        \mf{0}, \mf{0}
    \end{bmatrix}
    \end{equation}
    is satisfied, with $\mf{S}_h(\tau)$ defined in \eqref{eq:Sh}.   
\end{corollary}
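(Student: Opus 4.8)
The plan is to specialize the general convergence argument of Theorem \ref{th:main} to the quadratic case and show that the nonlinear term $\bm{\phi}_i(\cdot)$, which in general only satisfies the incremental passivity inequality \eqref{eq:increasing:passivity}, can here be written exactly as a linear map, so that the generic bound $\mf{S}_\gamma(\tau,\varepsilon)$ used to absorb the nonlinearity is replaced by a sharper term $\mf{S}_h(\tau)$ built directly from the Hessians $\{\mf{H}_i\}_{i\in\mathcal{I}}$. Concretely, when $f_i(\mf{x}_i)=\tfrac{1}{2}\mf{x}_i^\top\mf{H}_i\mf{x}_i+\mf{b}_i^\top\mf{x}_i$ we have $\nabla f_i(\mf{x}_i)=\mf{H}_i\mf{x}_i+\mf{b}_i$, so $\bm{\phi}_i\big(\tfrac{\mf{q}_i+\mf{q}_i^+}{2}\big)=-\mf{H}_i\tfrac{\mf{q}_i+\mf{q}_i^+}{2}-\mf{b}_i$, and the implicit update \eqref{eq:main:algorithm} becomes an affine equation in $(\mf{q}^+,\mf{p}^+)$; in particular the discrete gradient of $f_i$ coincides with the gradient evaluated at the midpoint, so the Lyapunov increment computation in the proof of Theorem \ref{th:main} closes exactly rather than via an inequality.

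First I would recall, from the proof of Theorem \ref{th:main} in Appendix \ref{ap:proof}, the exact point at which the cross term $(\overline{\nabla}H-\nabla H^*)^\top(\bm{\phi}(\overline{\nabla}H)-\bm{\phi}(\nabla H^*))$ is bounded using strong convexity/Lipschitzness of the $f_i$ and the free parameter $\varepsilon\geq 0$, producing the term $\mf{S}_\gamma(\tau,\varepsilon)$ in \eqref{eq:lmi:S}. In the quadratic case this cross term equals $-(\overline{\nabla}H-\nabla H^*)^\top \mathrm{blockdiag}(\mf{H}_1,\dots,\mf{H}_\mfs{N})(\cdots)$ times the appropriate selection/averaging matrices, which is an \emph{exact} quadratic form in the stacked state; collecting it into the Lyapunov difference and re-deriving the matrix representation gives precisely $\mf{S}_h(\tau)$ as defined in \eqref{eq:Sh}. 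Then the same manipulations (multiplying through by $\tau$, substituting $\overline{\nabla}V=\overline{\nabla}H-\nabla H^*$, and writing everything in terms of the reduced coordinates that produced $\mf{S}_r(\tau)$ and $\bm{\Lambda}(\tau)$) show that $V(\mf{x}^+)-V(\mf{x})\le -u\|\cdot\|^2$ on the consensus-error subspace whenever \eqref{eq:lmi:P}, $u>0$ and \eqref{eq:lmi:linear} hold, with the $\mf{U}$-block of \eqref{eq:lmi} no longer needed because there is no nonlinear remainder to dominate.

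Next I would handle well-posedness: I must argue that \eqref{eq:main:algorithm} still has a unique solution $\mf{x}^+$ for every $\mf{x}$, which for the affine system reduces to invertibility of the coefficient matrix of $(\mf{q}^+,\mf{p}^+)$; this follows from \eqref{eq:lmi:P} exactly as in the proof of Theorem \ref{th:main} (the block structure $\bm{\Lambda}(\tau)\succ\mf{0}$ forces the relevant matrix to be nonsingular), and uniqueness of the equilibrium is inherited from Assumption \ref{as:well:posedness}-2). With well-posedness and the Lyapunov decrease in hand, global asymptotic stability of $\mf{x}^*$ follows by the standard discrete-time Lyapunov argument, mirroring Theorem \ref{prop:discrete:convergence} and Theorem \ref{th:main}.

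I expect the main obstacle to be purely bookkeeping rather than conceptual: one must track carefully how the midpoint-averaging operator interacts with the Kronecker/Laplacian structure and with the change of coordinates used in Appendix \ref{ap:proof}, so that the quadratic term assembles into exactly the matrix $\mf{S}_h(\tau)$ claimed in \eqref{eq:Sh} and the parameters $\varepsilon$, $\mf{U}$ drop out cleanly; checking that the replacement of $\mf{S}_\gamma(\tau,\varepsilon)$ by $\mf{S}_h(\tau)$ is not merely sufficient but indeed the tight specialization (so that the resulting LMI is genuinely less conservative) is where the care is needed. The strong convexity in Assumption \ref{as:regular:f} guarantees each $\mf{H}_i\succeq \mu\mf{I}\succ\mf{0}$, which is what keeps the diagonal Hessian block positive definite and the argument identical in spirit to the nonlinear case.
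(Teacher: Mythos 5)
Your proposal is correct and takes essentially the same route as the paper: both exploit that $\nabla f$ is affine so that $\bm{\psi}(\overline{\mf{q}})=\mf{H}(\overline{\mf{q}}-\mf{q}^*)$ exactly, replace the strong-convexity/Lipschitz/Young bound \eqref{eq:eB} of Theorem \ref{th:main} by the exact quadratic form $\overline{\mf{e}}^\top\mf{S}_h(\tau)\overline{\mf{e}}$ (so $\varepsilon$ and $\mf{U}$ drop out), and then reuse the Lyapunov decrease and well-posedness arguments of Theorem \ref{th:main} and Lemma \ref{lem:well:posedness} unchanged.
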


Moreover, for this comparison, we looked for the best value of $\tau>0$, leading to the fastest behavior in each method tested. These values of $\tau$ were found by trial and error by increasing its value until divergence and keeping the best one in terms of speed. For MID, Euler discretization, DTGT, CTGT, DIGing, and COOR, the values of $\tau$ resulted in approximately $3.78, 0.71, 0.58, 0.65, 0.05, 0.078$ respectively. Trajectories for each case are shown in Figure \ref{fig:comparison:smalltau}, where all methods converge to the global optimum, with MID having a clear advantage in speed. 

In contrast, Figure \ref{fig:comparison:bigtau} shows the same scenario but now with $\tau=10$, showing that all other methods diverge while our proposal manages to converge. For both $\tau=3.78, 10$, we verified the feasibility of the LMI in \eqref{eq:lmi:linear}, which guarantees convergence.

To further stress our proposal, we performed simulations with $\tau=1,10,100,1000$. In all cases, the LMI in \eqref{eq:lmi:linear} was feasible. The convergence results are shown in Figure \ref{fig:multipletau}, where it is observed that even in the extreme case of $\tau=1000$, our proposal converges despite a slower convergence rate.

\begin{figure}
\centering
\includegraphics[width=0.5\textwidth]{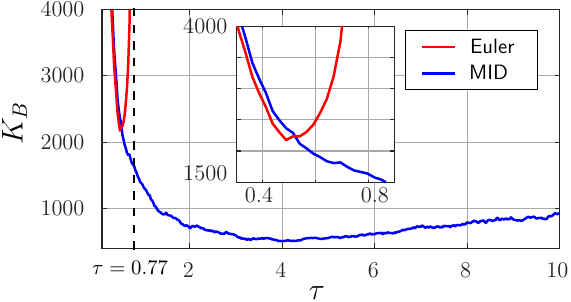}
\caption{Comparison of the number of iterations $K_B$ required to reach $\|\mf{q}[k]-(\mathds{1}\otimes\mf{I}_m)\bm{\theta}^*\|\leq B$ with $B=10^{-6}$.  The proposed MID allows the reduction of the number of iterations by increasing $\tau$, with an optimal value around $\tau=4$. In contrast, increasing $\tau$ for Euler discretization does not feature such great reduction and leads to divergence at around $\tau=0.77$.}
\label{fig:speed}
\end{figure}

\begin{figure}
\centering
\includegraphics[width=0.5\textwidth]{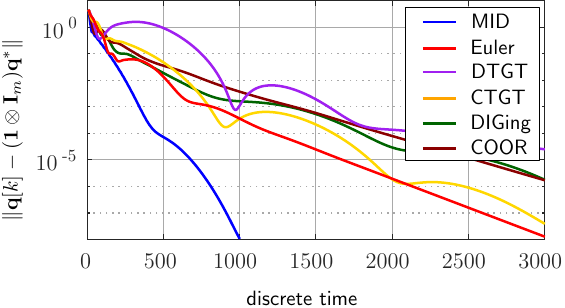}
\caption{Comparison of MID, Euler discretization as well as DTGT \cite{Notarstefano2019}, CTGT \cite{guido2023}, DIGing \cite{nedic2017achieving} and COOR \cite{kia2015distributed} for a quadratic optimization problem. The fastest values of $\tau$ were found to be approximately $3.78, 0.71, 0.58, 0.65, 0.05, 0.078$. The proposal MID has a clear advantage in terms of speed of convergence.}
\label{fig:comparison:smalltau}
\end{figure}

\begin{figure}
\centering
\includegraphics[width=0.5\textwidth]{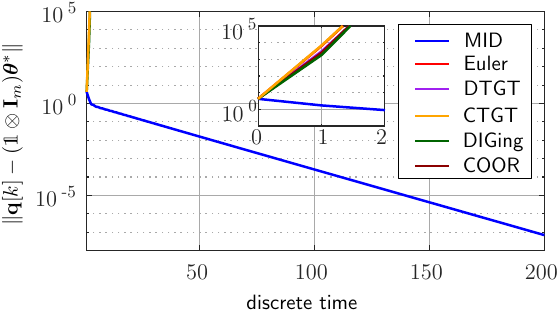}
\caption{Comparison of MID, Euler discretization as well as DTGT \cite{Notarstefano2019}, CTGT \cite{guido2023}, DIGing \cite{nedic2017achieving} and COOR \cite{kia2015distributed} for a quadratic optimization problem with $\tau=10$.}
\label{fig:comparison:bigtau}
\end{figure}

\begin{figure}
\centering
\includegraphics[width=0.5\textwidth]{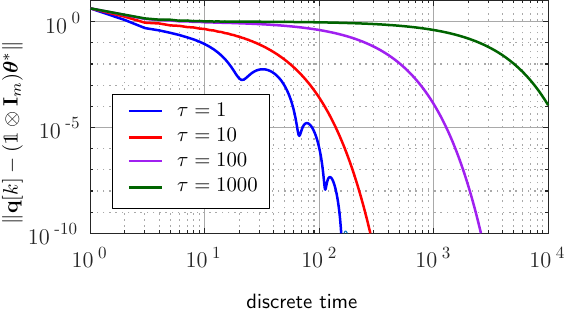}
\caption{Comparison of MID for a quadratic optimization problem with $\tau=1, 10, 100, 1000$.}
\label{fig:multipletau}
\end{figure}
\section{Conclusions}
\label{sec:conclusions}
In this work, we introduced a novel distributed optimization approach that achieves parameter-free convergence, a significant departure from traditional methods reliant on precise parameter selection, such as the learning rate. Using the Mixed Implicit Discretization (MID) of a continuous-time port-Hamiltonian system, we demonstrated how this system-based strategy enhances convergence speeds without worrying about the relationship between parameters and stability. Our method leveraged the inherent structure of the port-Hamiltonian system to ensure convergence by aligning the system's equilibrium with the optimization problem's solution, even when transitioning from continuous to discrete time. Through the MID scheme, we enabled local resolution of nonlinear implicit equations by network agents, formally proving the algorithm's parameter-free convergence capability. Extensive numerical analyses highlighted our approach's superiority in scenarios where traditional methods fail due to parameter constraints, thereby significantly accelerating convergence speeds.

\appendices

\section{Convex functions}
\label{ap:convex}
This section provides some useful facts used in the rest of the manuscript. We refer the reader to \cite{boyd} for standard definitions of convex functions. 
\begin{proposition}
\label{prop:boyd}\cite[Section 9.1.2]{boyd}
Let $g:\mathbb{R}^n\to\mathbb{R}$ be a strongly convex differentiable function. Then,
    $$
    g(\mf{v})\geq g(\mf{u})+\nabla g(\mf{u})^\top(\mf{v}-\mf{u}) + \frac{\mu}{2}\|\mf{v}-\mf{u}\|^2
    $$
    for arbitrary $\mf{u},\mf{v}\in\mathbb{R}^n$ and some $\mu>0$ referred to as the strong convexity constant. As a consequence,
    $$
    (\mf{v}-\mf{u})^\top(\nabla g(\mf{v})-\nabla g(\mf{u}))\geq \mu\|\mf{v}-\mf{u}\|^2
    $$
\end{proposition}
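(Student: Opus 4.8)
The plan is to derive both inequalities from the definition of strong convexity in its equivalent \emph{convex-shift} form: $g$ is strongly convex with constant $\mu>0$ precisely when the shifted function $h(\mf{x}) := g(\mf{x}) - \frac{\mu}{2}\|\mf{x}\|^2$ is convex. I would first invoke the first-order characterization of differentiable convex functions (a standard fact, available from \cite{boyd}): since $h$ is convex and differentiable, it lies above each of its tangent planes, i.e. $h(\mf{v}) \geq h(\mf{u}) + \nabla h(\mf{u})^\top(\mf{v}-\mf{u})$ for all $\mf{u},\mf{v}\in\mathbb{R}^n$.

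Next I would substitute $\nabla h(\mf{x}) = \nabla g(\mf{x}) - \mu\mf{x}$ and $h(\mf{x}) = g(\mf{x}) - \frac{\mu}{2}\|\mf{x}\|^2$ into that tangent inequality and rearrange to isolate $g(\mf{v})$. The residual quadratic terms that appear, namely $\frac{\mu}{2}\|\mf{v}\|^2 - \frac{\mu}{2}\|\mf{u}\|^2 - \mu\,\mf{u}^\top(\mf{v}-\mf{u})$, collapse under completing the square to exactly $\frac{\mu}{2}\|\mf{v}-\mf{u}\|^2$, which delivers the first claimed inequality. This completing-the-square identity is the only computational step, and it is routine.

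For the stated consequence, I would apply the first inequality twice, once as written and once with the roles of $\mf{u}$ and $\mf{v}$ interchanged, and then add the two. The function values $g(\mf{u})$ and $g(\mf{v})$ cancel, leaving $0 \geq (\nabla g(\mf{u}) - \nabla g(\mf{v}))^\top(\mf{v}-\mf{u}) + \mu\|\mf{v}-\mf{u}\|^2$. Rearranging and using the symmetry of the Euclidean inner product then gives $(\mf{v}-\mf{u})^\top(\nabla g(\mf{v})-\nabla g(\mf{u})) \geq \mu\|\mf{v}-\mf{u}\|^2$, i.e. strong monotonicity of the gradient.

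The argument is essentially self-contained, the only external input being the first-order condition for convex functions, which I would cite rather than reprove. There is no genuine obstacle here; the points to watch are keeping the sign bookkeeping in the substitution correct and ensuring that the definition of strong convexity fixed earlier in this appendix matches the convex-shift form I rely on. If instead strong convexity is posited directly as the first displayed inequality, that step becomes immediate and only the symmetrization argument for the consequence remains.
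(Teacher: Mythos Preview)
Your argument is correct and is the standard textbook route: pass to the shifted function $h=g-\tfrac{\mu}{2}\|\cdot\|^2$, apply the first-order convexity inequality to $h$, complete the square to recover the quadratic lower bound on $g$, and then symmetrize to obtain the strong monotonicity inequality. There is nothing to fault.

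However, there is no paper proof to compare against. The paper states this proposition with the citation \cite[Section~9.1.2]{boyd} and supplies no argument; it is invoked as a known fact from Boyd and Vandenberghe. So your write-up goes beyond what the paper does, which is simply to quote the result.
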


\begin{lemma}
\label{le:lyapunov}
Let $H:\mathbb{R}^n\to\mathbb{R}$ be a differentiable function. Moreover, let some fixed $\mf{x}^*\in\mathbb{R}^n$ and $V_{\mf{x}^*}:\mathbb{R}^n\to\mathbb{R}$ be defined as $$
    V_{\mf{x}^*}(\mf{x}) = H(\mf{x}) - H(\mf{x}^*) - \nabla H(\mf{x}^*)^\top(\mf{x}-\mf{x}^*).$$
Then, If $H$ is strongly convex with constant $\mu>0$, $V_{\mf{x}^*}(\mf{x})$ satisfy the radially unboundedness condition given by:
$$
V_{\mf{x}^*}(\mf{x})\geq \frac{\mu}{2}\|\mf{x}-\mf{x}^*\|^2
$$
\end{lemma}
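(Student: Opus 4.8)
The plan is to derive the bound directly from the strong convexity inequality already established in Proposition \ref{prop:boyd}. Since $H$ is differentiable and strongly convex with constant $\mu>0$, Proposition \ref{prop:boyd} applies with $g = H$. The entire argument amounts to a single substitution followed by an algebraic rearrangement, so I expect no serious obstacle here; the work is essentially bookkeeping.

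Concretely, I would first instantiate the inequality from Proposition \ref{prop:boyd} by choosing $\mf{v} = \mf{x}$ and $\mf{u} = \mf{x}^*$, which yields
$$
H(\mf{x}) \geq H(\mf{x}^*) + \nabla H(\mf{x}^*)^\top(\mf{x}-\mf{x}^*) + \frac{\mu}{2}\|\mf{x}-\mf{x}^*\|^2.
$$
Next, I would move the two affine terms on the right to the left-hand side, obtaining
$$
H(\mf{x}) - H(\mf{x}^*) - \nabla H(\mf{x}^*)^\top(\mf{x}-\mf{x}^*) \geq \frac{\mu}{2}\|\mf{x}-\mf{x}^*\|^2.
$$
Finally, I would observe that the left-hand side is precisely the definition of $V_{\mf{x}^*}(\mf{x})$, so the claimed inequality $V_{\mf{x}^*}(\mf{x}) \geq \tfrac{\mu}{2}\|\mf{x}-\mf{x}^*\|^2$ follows immediately.

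There is no genuine difficulty in this lemma; it is a direct consequence of the first-order characterization of strong convexity. The only point worth flagging is that the result requires $H$ to be differentiable (so that $\nabla H(\mf{x}^*)$ is well defined and Proposition \ref{prop:boyd} is applicable), which is assumed in the hypothesis. Since the bound holds for arbitrary $\mf{x}\in\mathbb{R}^n$ and the right-hand side $\tfrac{\mu}{2}\|\mf{x}-\mf{x}^*\|^2\to\infty$ as $\|\mf{x}\|\to\infty$, the radial unboundedness of $V_{\mf{x}^*}$ is an immediate corollary, completing the justification of its use as a Lyapunov function candidate in Corollary \ref{cor:convergence} and Theorem \ref{prop:discrete:convergence}.
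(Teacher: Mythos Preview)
Your proposal is correct and follows exactly the same approach as the paper's own proof: both invoke Proposition \ref{prop:boyd} with $g=H$, $\mf{v}=\mf{x}$, $\mf{u}=\mf{x}^*$, and rearrange the resulting strong-convexity inequality to isolate $V_{\mf{x}^*}(\mf{x})$ on the left.
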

\begin{proof}
    Use Proposition \ref{prop:boyd} with $g = H$, $\mf{v}=\mf{x}$ and $\mf{u}=\mf{x}^*$:
    $$
    \begin{aligned}
    &V_{\mf{x}^*}(\mf{x}) =H(\mf{x}) - H(\mf{x}^*)-\nabla H(\mf{x}^*)^\top(\mf{x}-\mf{x}^*) \\
    &\geq \left(H(\mf{x}^*)+\nabla H(\mf{x}^*)^\top(\mf{x}-\mf{x}^*) + \frac{\mu}{2}\|\mf{x}-\mf{x}^*\|^2\right)\\&- H(\mf{x}^*)-\nabla H(\mf{x}^*)^\top(\mf{x}-\mf{x}^*)=\frac{\mu}{2}\|\mf{x}-\mf{x}^*\|^2
    \end{aligned}
    $$
\end{proof}

\section{Proof of Theorem \ref{th:main}}
\label{ap:proof}
Through this section, for any variable $\mf{x}$, the notations $\overline{\bullet}, \tilde{\bullet}$ act as $\overline{\mf{x}}=(\mf{x}^++\mf{x})/2, \tilde{\mf{x}}=\mf{x}^+-\mf{x}$. We start by showing the well-posedness of \eqref{eq:main:algorithm}.
\begin{lemma}
\label{lem:well:posedness}
    Consider the conditions of Theorem \ref{th:main}. Then, for any $\{\mf{x}_i\}_{i\in\mathcal{I}}$ there always exist a unique $\{\mf{x}_i^+\}_{i\in\mathcal{I}}$ satisfying \eqref{eq:main:algorithm}.
\end{lemma}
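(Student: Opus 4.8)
The plan is to eliminate the auxiliary variables $\mf{p}_i^+$ from the implicit update and reduce well-posedness of \eqref{eq:main:algorithm} to the existence of a unique minimiser of a strongly convex function; recall that $\mf{x}_i=[\mf{q}_i^\top,\mf{p}_i^\top]^\top$, so the claim amounts to producing, for given $(\mf{q},\mf{p})$, a unique pair $(\mf{q}^+,\mf{p}^+)$. \textbf{Step 1 (eliminate $\mf{p}^+$).} Write \eqref{eq:main:algorithm} compactly, with $\mf{D}$ and $\mf{A}$ the degree and adjacency matrices of $\mathcal{G}$, noting that $\sum_{j\in\mathcal{N}_i}(\mf{v}_i^+-\mf{v}_j)$ assembles into $(\mf{D}\otimes\mf{I}_m)\mf{v}^+-(\mf{A}\otimes\mf{I}_m)\mf{v}$. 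The $\mf{p}$-equation then reads
$$
\mf{p}^+ = \mf{p} + \tau\big((\mf{D}\otimes\mf{I}_m)\mf{q}^+ - (\mf{A}\otimes\mf{I}_m)\mf{q}\big),
$$
so $\mf{p}^+$ is uniquely fixed once $\mf{q}^+$ is known. Substituting this into the $\mf{q}$-equation and clearing the factor $\tau$, the $\mf{p}^+$-dependent term contributes $-\tau^2(\mf{D}\otimes\mf{I}_m)^2\mf{q}^+$ plus known quantities, leaving a single implicit equation in $\mf{q}^+$,
$$
\mf{Q}(\tau)\,\mf{q}^+ + \tau\,\nabla f\!\left(\tfrac{\mf{q}^++\mf{q}}{2}\right) = \mf{c}, \qquad \mf{Q}(\tau):=\mf{I}+\tau(\mf{D}\otimes\mf{I}_m)+\tau^2(\mf{D}\otimes\mf{I}_m)^2,
$$
where $\mf{c}\in\mathbb{R}^{\mfs{N}m}$ depends only on $(\mf{q},\mf{p})$, on $\tau$ and on $\mathcal{G}$. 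Hence $(\mf{q}^+,\mf{p}^+)$ solves \eqref{eq:main:algorithm} if and only if $\mf{q}^+$ solves this equation and $\mf{p}^+$ is given by the displayed formula, so it suffices to show this equation has a unique root.

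\textbf{Step 2 (variational reformulation and conclusion).} Since $\mf{D}\otimes\mf{I}_m\succeq\mf{0}$, the matrix $\mf{Q}(\tau)$ is symmetric with $\mf{Q}(\tau)\succeq\mf{I}$. Introduce, for the separable function $f(\mf{z})=\sum_{i\in\mathcal{I}}f_i(\mf{z}_i)$,
$$
\Phi(\mf{z}) := \tfrac12\mf{z}^\top\mf{Q}(\tau)\mf{z} + 2\tau\, f\!\left(\tfrac{\mf{z}+\mf{q}}{2}\right) - \mf{c}^\top\mf{z},
$$
so that $\nabla\Phi(\mf{z})=\mf{Q}(\tau)\mf{z}+\tau\nabla f((\mf{z}+\mf{q})/2)-\mf{c}$ and the equation of Step~1 is precisely $\nabla\Phi(\mf{q}^+)=\mf{0}$. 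By Assumption \ref{as:regular:f} each $f_i$ is convex and $C^1$, hence $\mf{z}\mapsto 2\tau f((\mf{z}+\mf{q})/2)$ is convex and $C^1$; together with $\mf{Q}(\tau)\succeq\mf{I}$ this makes $\Phi$ strongly convex (with modulus at least $1$) and continuously differentiable on $\mathbb{R}^{\mfs{N}m}$. A strongly convex $C^1$ function is coercive, hence attains a global minimum; strong convexity makes the minimiser unique, and for convex differentiable functions the critical points are exactly the minimisers. Therefore $\nabla\Phi(\mf{q}^+)=\mf{0}$ has exactly one solution, which yields the unique $\mf{q}^+$ and, through Step~1, the unique $\mf{p}^+$.

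The point to stress — and the reason the argument is organised this way rather than being a routine computation — is that one should \emph{not} try to establish well-posedness by contracting the map $\mf{q}^+\mapsto\mf{Q}(\tau)^{-1}(\mf{c}-\tau\nabla f((\mf{q}^++\mf{q})/2))$: since $\nabla f$ is only $L$-Lipschitz, this map is $(\tau L/2)$-Lipschitz, so a fixed-point argument would force $\tau<2/L$ and hence fail to be parameter-free. The variational/monotonicity route above removes every restriction on $\tau>0$, which is exactly what Theorem \ref{th:main} requires; the only remaining work is the bookkeeping of the substitution in Step~1, which I would carry out in full in the paper.
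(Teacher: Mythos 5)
Your proof is correct and follows essentially the same route as the paper's: eliminate $\mf{p}^+$ via the explicit $\mf{p}$-update, substitute into the $\mf{q}$-equation, and recognise the resulting implicit equation as the stationarity condition $\nabla\Phi(\mf{q}^+)=\mf{0}$ of a strongly convex function (the paper does this agent-by-agent with $g(\mf{q}_i^+)$ and $\mf{G}=(1/\tau+|\mathcal{N}_i|+\tau|\mathcal{N}_i|^2)\mf{I}_m$, which is exactly your stacked $\mf{Q}(\tau)$ up to the overall factor $\tau$). The only cosmetic caveat is that the symbol $\mf{Q}$ already denotes $(\mf{D}+\mf{A})\otimes\mf{I}_m/2$ in Appendix B, so your matrix should be renamed to avoid a clash.
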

\begin{proof}
    First, recall the partition $\mf{x}_i=[\mf{q}_i^\top,\mf{p}_i^\top]^\top$. In this context, and with the choices of $\mf{M}, \bm{\phi}_i$ one can solve for $\mf{p}_i^+$ as
    \begin{equation}
    \label{eq:pi}
    \mf{p}_i^+  = \mf{p}_i+\tau\left(|\mathcal{N}_i|\mf{q}_i^+-\sum_{j\in\mathcal{N}_i}\mf{q}_j\right) 
    \end{equation}
   Substitute \eqref{eq:pi} into the update equation for $\mf{q}_i$ in \eqref{eq:main:algorithm}. Rearranging:
    \begin{equation}
    \label{eq:condition:existence}
    \mf{G}\mf{q}_i^++\nabla f_i\left(\frac{\mf{q}_i^++\mf{q}_i}{2}\right)+\mf{c}=0
    \end{equation}
with 
$$
\begin{aligned}
\mf{G}&=(1/\tau+|\mathcal{N}_i|+\tau|\mathcal{N}_i|^2)\mf{I}_m\\ \mf{c} &= -\sum_{j\in\mathcal{N}_i}\left((1+\tau)\mf{q}_j+\mf{p}_j\right)+\mf{q}_i/\tau
\end{aligned}
$$
Note that \eqref{eq:condition:existence} is equivalent to $\nabla g(\mf{q}_i^+)=\mf{0}$ where $\nabla$ is taken with respect to $\mf{q}_i^+$ and with
$$
g(\mf{q}_i^+) = \frac{1}{2}(\mf{q}_i^+)^\top\mf{G}\mf{q}_i^+ + 2f_i\left(\frac{\mf{q}_i^++\mf{q}_i}{2}\right)+\mf{c}^\top\mf{q}_i^+
$$

Note that $\mf{G}\succ \mf{0}$ and $f_i$ is a convex function. Hence, $g$ is convex as well. Henceforth, there exist a unique solution to $\nabla g(\mf{q}_i^+)=\mf{0}$, which coincides with the solution of the convex optimization program $\min_{\mf{q}_i^+\in\mathbb{R}^m}g(\mf{q}_i^+)$. This implies the existence of a unique $\mf{p}_i^+$ as well from \eqref{eq:pi}.
\end{proof}
Now, we write \eqref{eq:main:algorithm} in a more convenient representation.

\begin{lemma}
\label{lem:system}
    Consider the conditions of Theorem \ref{th:main} and let ${\mf{q}}=[\mf{q}_i]_{i\in\mathcal{I}}$ and ${\mf{p}}=[\mf{p}_i]_{i\in\mathcal{I}}$. Moreover, let $\mf{A},\mf{D}=\text{\normalfont diag}(\mf{A}\mathds{1})$ denote the adjacency and degree matrices of $\mathcal{G}$ and define
    \begin{equation}
    \label{eq:matrices}
    \begin{aligned}
        &{\mf{L}_m} = (\mf{D}-\mf{A})\otimes\mf{I}_m,\quad {\mf{Q}} = (\mf{D}+\mf{A})\otimes\mf{I}_m/2 \\
        &\bm{\Lambda}(\tau)=\frac{1}{\tau^2}\mf{I}+\frac{1}{\tau}{\mf{Q}}+{\mf{Q}}^2,\\
        &    \mf{S}_p(\tau)=\begin{bmatrix}
        -\bm{\Lambda}(\tau)^{-1}(\frac{1}{\tau}\mf{I}+{\mf{Q}})\mf{L}_m & -\frac{1}{\tau}\bm{\Lambda}(\tau)^{-1}\mf{L}_m \\
        \frac{1}{\tau}\bm{\Lambda}(\tau)^{-1}\mf{L}_m & -\bm{\Lambda}(\tau)^{-1}{\mf{Q}}\mf{L}_m 
        \end{bmatrix}\\
        &    \mf{S}_r(\tau) = \begin{bmatrix}
    -\bm{\Lambda}(\tau)^{-1}(\frac{1}{\tau}\mf{L}_m + {\mf{Q}}\mf{L}_m+\mf{L}_m{\mf{Q}})& -\frac{1}{\tau}\bm{\Lambda}(\tau)^{-1}\mf{L}_m\\
    \mf{L}_m & \mf{0}
    \end{bmatrix}
    \end{aligned}
    \end{equation}
    
    Then, $\bm{\Lambda}(\tau)$ is invertible. Moreover, using $\mf{r}=\mf{p}-{\mf{Q}}\mf{q}$, trajectories of \eqref{eq:main:algorithm} satisfy:
        \begin{equation}
        \label{eq:mid:sys2}
        \begin{bmatrix}
            \tilde{\mf{q}} \\
            \tilde{\mf{r}}
        \end{bmatrix} = \mf{S}_r(\tau)\begin{bmatrix}
            \overline{\mf{q}} \\
            \overline{\mf{r}}
        \end{bmatrix}-\begin{bmatrix}
            \bm{\Lambda}(\tau)^{-1}/\tau\\
           \mf{0}
        \end{bmatrix}\nabla f(\overline{\mf{q}})
    \end{equation}
\end{lemma}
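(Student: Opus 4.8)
\textbf{Proof plan for Lemma \ref{lem:system}.}
The plan is to carry out the change of variables $\mf{r} = \mf{p} - \mf{Q}\mf{q}$ directly on the implicit system \eqref{eq:main:algorithm}, written in the stacked notation $\tilde{\bullet}, \overline{\bullet}$. First I would rewrite \eqref{eq:main:algorithm} in stacked form: using $\mf{L}_m = (\mf{D}-\mf{A})\otimes\mf{I}_m$ for the Laplacian and $\mf{Q} = (\mf{D}+\mf{A})\otimes\mf{I}_m/2$ for the "averaging" operator, observe that $\sum_{j\in\mathcal{N}_i}(\mf{q}_i^+ - \mf{q}_j)$ for agent $i$ is the $i$-th block of $\mf{D}\mf{q}^+ - \mf{A}\mf{q}$ (here I use $|\mathcal{N}_i|$ = degree), and similarly for the $\mf{p}$ terms. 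So the stacked system reads $\tilde{\mf{q}}/\tau = -(\mf{D}\otimes\mf{I}_m)(\mf{q}^+ + \mf{p}^+) + (\mf{A}\otimes\mf{I}_m)(\mf{q} + \mf{p}) - \nabla f(\overline{\mf{q}})$ and $\tilde{\mf{p}}/\tau = (\mf{D}\otimes\mf{I}_m)\mf{q}^+ - (\mf{A}\otimes\mf{I}_m)\mf{q}$. The key algebraic trick is to notice that $\mf{D}\mf{v}^+ - \mf{A}\mf{v} = \mf{Q}\tilde{\mf{v}} + \mf{L}_m\overline{\mf{v}}$ for any stacked vector $\mf{v}$, since $\mf{Q} + \mf{L}_m/2$... wait, more carefully: $\mf{D}\mf{v}^+ - \mf{A}\mf{v} = \mf{D}(\overline{\mf{v}} + \tilde{\mf{v}}/2) - \mf{A}(\overline{\mf{v}} - \tilde{\mf{v}}/2) = (\mf{D}-\mf{A})\overline{\mf{v}} + (\mf{D}+\mf{A})\tilde{\mf{v}}/2 = \mf{L}_m\overline{\mf{v}} + \mf{Q}\tilde{\mf{v}}$. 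This identity converts the mixed present/future consensus terms into clean $\tilde{\bullet}$ and $\overline{\bullet}$ expressions.

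Next I would substitute this identity into both equations, giving $\tilde{\mf{q}}/\tau = -\mf{L}_m\overline{\mf{q}} - \mf{Q}\tilde{\mf{q}} - \mf{L}_m\overline{\mf{p}} - \mf{Q}\tilde{\mf{p}} - \nabla f(\overline{\mf{q}})$ and $\tilde{\mf{p}}/\tau = \mf{L}_m\overline{\mf{q}} + \mf{Q}\tilde{\mf{q}}$. Then I introduce $\mf{r} = \mf{p} - \mf{Q}\mf{q}$, so $\tilde{\mf{r}} = \tilde{\mf{p}} - \mf{Q}\tilde{\mf{q}}$ and $\overline{\mf{r}} = \overline{\mf{p}} - \mf{Q}\overline{\mf{q}}$, hence $\overline{\mf{p}} = \overline{\mf{r}} + \mf{Q}\overline{\mf{q}}$ and $\tilde{\mf{p}} = \tilde{\mf{r}} + \mf{Q}\tilde{\mf{q}}$. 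Substituting, the second equation becomes $(\tilde{\mf{r}} + \mf{Q}\tilde{\mf{q}})/\tau = \mf{L}_m\overline{\mf{q}} + \mf{Q}\tilde{\mf{q}}$, i.e. $\tilde{\mf{r}} = \tau\mf{L}_m\overline{\mf{q}} + \tau\mf{Q}\tilde{\mf{q}} - \mf{Q}\tilde{\mf{q}}$; and the first equation, after collecting $\tilde{\mf{q}}$ terms, becomes $\tilde{\mf{q}}/\tau + \mf{Q}\tilde{\mf{q}} + \mf{Q}\tilde{\mf{p}} = -\mf{L}_m\overline{\mf{q}} - \mf{L}_m\overline{\mf{r}} - \mf{L}_m\mf{Q}\overline{\mf{q}} - \nabla f(\overline{\mf{q}})$. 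Here $\tilde{\mf{p}}$ should be eliminated using the second equation ($\tilde{\mf{p}}/\tau = \mf{L}_m\overline{\mf{q}} + \mf{Q}\tilde{\mf{q}}$, so $\mf{Q}\tilde{\mf{p}} = \tau\mf{Q}\mf{L}_m\overline{\mf{q}} + \tau\mf{Q}^2\tilde{\mf{q}}$). After this substitution the left side is $(\frac{1}{\tau}\mf{I} + \mf{Q} + \tau\mf{Q}^2)\tilde{\mf{q}} = \tau\bm{\Lambda}(\tau)\tilde{\mf{q}}$, which is exactly where $\bm{\Lambda}(\tau)$ appears. Since $\mf{Q} = (\mf{D}+\mf{A})\otimes\mf{I}_m/2 \succeq \mf{0}$ (the signless Laplacian scaled, which is PSD) and $\frac{1}{\tau}\mf{I} \succ \mf{0}$, the matrix $\bm{\Lambda}(\tau) = \frac{1}{\tau^2}\mf{I} + \frac{1}{\tau}\mf{Q} + \mf{Q}^2 \succ \mf{0}$ is positive definite, hence invertible — this establishes the invertibility claim. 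Solving for $\tilde{\mf{q}}$ gives $\tilde{\mf{q}} = \frac{1}{\tau}\bm{\Lambda}(\tau)^{-1}\big(-\mf{L}_m\overline{\mf{q}} - \mf{L}_m\overline{\mf{r}} - \mf{L}_m\mf{Q}\overline{\mf{q}} - \tau\mf{Q}\mf{L}_m\overline{\mf{q}} - \nabla f(\overline{\mf{q}})\big)$; I would then back-substitute into the expression for $\tilde{\mf{r}}$ and verify the resulting coefficient blocks match $\mf{S}_r(\tau)$ entry by entry.

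The bulk of the work is bookkeeping rather than conceptual difficulty; the one place to be careful is keeping track of whether the $\tilde{\mf{q}}$ that appears on the right-hand side of the $\tilde{\mf{r}}$ equation has been fully eliminated in favor of $\overline{\mf{q}}, \overline{\mf{r}}, \nabla f(\overline{\mf{q}})$, so that the final system is genuinely in the "descriptor-free" form \eqref{eq:mid:sys2} with $\mf{S}_r(\tau)$ acting on $[\overline{\mf{q}}^\top, \overline{\mf{r}}^\top]^\top$ alone. I expect the main obstacle to be verifying that the $(1,1)$ and $(2,1)$ blocks of $\mf{S}_r(\tau)$ as written in \eqref{eq:matrices} — in particular the combination $\frac{1}{\tau}\mf{L}_m + \mf{Q}\mf{L}_m + \mf{L}_m\mf{Q}$ in the top-left — come out correctly, since this requires carefully combining the $-\mf{L}_m\overline{\mf{q}}$, $-\mf{L}_m\mf{Q}\overline{\mf{q}}$, $-\tau\mf{Q}\mf{L}_m\overline{\mf{q}}$ terms with the extra contribution from plugging $\tilde{\mf{q}}$ back into $\tilde{\mf{r}} = \tau\mf{L}_m\overline{\mf{q}} + (\tau\mf{Q} - \mf{Q})\tilde{\mf{q}}$ and confirming that the $\bm{\Lambda}(\tau)^{-1}$ factors distribute as claimed (noting $\mf{Q}$ and $\bm{\Lambda}(\tau)$ commute because $\bm{\Lambda}(\tau)$ is a polynomial in $\mf{Q}$). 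Once these block identities check out, equation \eqref{eq:mid:sys2} follows immediately and the lemma is proved.
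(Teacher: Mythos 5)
Your overall route is the same as the paper's: stack the agents, use the identity $\mf{D}\mf{v}^+-\mf{A}\mf{v}=\mf{L}_m\overline{\mf{v}}+\mf{Q}\tilde{\mf{v}}$ (the paper gets it by substituting $\mf{x}^+=\overline{\mf{x}}+\tilde{\mf{x}}/2$, $\mf{x}=\overline{\mf{x}}-\tilde{\mf{x}}/2$), identify $\bm{\Lambda}(\tau)$ as the Schur-type quantity that makes the implicit step solvable, argue positive definiteness from $\mf{Q}\succeq\mf{0}$ (the paper invokes diagonal dominance of $\mf{D}+\mf{A}$, which is the same fact), and then change variables to $\mf{r}$. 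The only organizational difference is that you eliminate $\tilde{\mf{p}}$ by Gaussian elimination and then substitute $\mf{r}$, whereas the paper first inverts the $2\times 2$ block matrix explicitly to get $\mf{S}_p(\tau)$ and then conjugates by a lower-triangular $\mf{T}$; these are the same computation.

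The one genuine problem sits exactly where you flagged it and then deferred it: the entry-by-entry match with $\mf{S}_r(\tau)$ does \emph{not} close if you use $\mf{r}=\mf{p}-\mf{Q}\mf{q}$ literally. Your own intermediate formulas show this. With $\overline{\mf{p}}=\overline{\mf{r}}+\mf{Q}\overline{\mf{q}}$ the $(1,1)$ block comes out as $-\bm{\Lambda}(\tau)^{-1}\bigl(\tfrac{1}{\tau}\mf{L}_m+\mf{Q}\mf{L}_m+\tfrac{1}{\tau}\mf{L}_m\mf{Q}\bigr)$ rather than $-\bm{\Lambda}(\tau)^{-1}\bigl(\tfrac{1}{\tau}\mf{L}_m+\mf{Q}\mf{L}_m+\mf{L}_m\mf{Q}\bigr)$, and the residual $(\tau-1)\mf{Q}\tilde{\mf{q}}$ in $\tilde{\mf{r}}=\tau\mf{L}_m\overline{\mf{q}}+(\tau-1)\mf{Q}\tilde{\mf{q}}$ does not vanish, so back-substituting $\tilde{\mf{q}}$ would pollute the second row with $\bm{\Lambda}(\tau)^{-1}$ terms and destroy the claimed zero $(2,2)$ block. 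The scaling that actually makes everything cancel is $\mf{r}=\mf{p}-\tau\mf{Q}\mf{q}$: then $\tilde{\mf{r}}=\tilde{\mf{p}}-\tau\mf{Q}\tilde{\mf{q}}=\tau\mf{L}_m\overline{\mf{q}}$ exactly, the $(1,1)$ and $(1,2)$ blocks and the input column come out as stated, and the second row is $[\tau\mf{L}_m,\ \mf{0}]$ (so the $(2,1)$ entry of $\mf{S}_r$ in \eqref{eq:matrices} should carry a factor $\tau$). To be fair, this inconsistency originates in the paper: the lemma statement says $\mf{r}=\mf{p}-\mf{Q}\mf{q}$ while the paper's own proof uses $\mf{T}$ with off-diagonal block $-\mf{Q}/\tau$, and neither choice annihilates the second component of $\mf{T}[\bm{\Lambda}(\tau)^{-1}/\tau,\ \bm{\Lambda}(\tau)^{-1}\mf{Q}]^\top$ for general $\tau$ --- only $-\tau\mf{Q}$ does. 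Your proof is salvageable and structurally correct, but as written the final verification step would fail for $\tau\neq 1$; you need to fix the scaling of the change of variables (and correspondingly of the $(2,1)$ block) rather than hope the bookkeeping works out.
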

\begin{proof}
    First, write \eqref{eq:discrete:MAS:MID} as:
    \begin{equation}
    \label{eq:compact:discrete1}
    \tilde{\mf{x}}/\tau = (\mf{D}\otimes\mf{M})\mf{x}^+-(\mf{A}\otimes\mf{M})\mf{x}+\bm{\phi}(\overline{\mf{x}})
    \end{equation}
Now, note that the following relations hold:
    $$
    \begin{aligned}
    \mf{x}^+ &= \overline{\mf{x}}+\tilde{\mf{x}}/2\\
    \mf{x} &= \overline{\mf{x}}-\tilde{\mf{x}}/2
    \end{aligned}
    $$
    replacing these expressions in \eqref{eq:compact:discrete1} and rearranging results in:
    $$
    (\mf{I}/\tau -(\mf{A}+\mf{D})\otimes\mf{M}/2 )\tilde{\mf{x}} = (\mf{L}\otimes\mf{M})\overline{\mf{x}}+\bm{\phi}(\overline{\mf{x}})
    $$
    In terms of the partitions of $\mf{x}$ and the expression for $\mf{M}$ as in \eqref{eq:M}:
    \begin{equation}
    \label{eq:compact2}
    \begin{aligned}
    \begin{bmatrix}
    \mf{I}/\tau+{\mf{Q}}&{\mf{Q}}\\
    -{\mf{Q}}&\mf{I}/\tau
    \end{bmatrix}\begin{bmatrix}
    \tilde{\mf{q}}\\
    \tilde{\mf{p}}
    \end{bmatrix}=
    \begin{bmatrix}
    -\mf{L}_m&-\mf{L}_m\\
    \mf{L}_m&\mf{0}
    \end{bmatrix}\begin{bmatrix}
    \overline{\mf{q}}\\
    \overline{\mf{p}}
    \end{bmatrix} - \begin{bmatrix}
    \mf{I} \\
    \mf{0}
    \end{bmatrix}\nabla f(\overline{\mf{q}})
    \end{aligned}
    \end{equation}
Note that:
$$
\begin{bmatrix}
    \mf{I}/\tau+{\mf{Q}}&{\mf{Q}}\\
    -{\mf{Q}}&\mf{I}/\tau
    \end{bmatrix}^{-1} = \bm{\Lambda}(\tau)^{-1}\begin{bmatrix}
\mf{I}/\tau & -{\mf{Q}} \\
{\mf{Q}} & \mf{I}/\tau + {\mf{Q}}
    \end{bmatrix}
$$
Invertibility of $\bm{\Lambda}(\tau)$ comes from the fact that ${\mf{Q}}=\mf{D}+\mf{A}$ is diagonally dominant and thus is positive semi-definite \cite[Section 6.1]{horn}. Hence, ${\mf{Q}}^2$ is positive semi-definite, being $\bm{\Lambda}(\tau)$ positive definite and thus invertible. Therefore, from \eqref{eq:compact2} it follows that:
    \begin{equation}
        \label{eq:mid:sys1}
        \begin{aligned}
        &\begin{bmatrix}
            \tilde{\mf{q}} \\
            \tilde{\mf{p}}
        \end{bmatrix} = \mf{S}_p(\tau)\begin{bmatrix}
            \overline{\mf{q}} \\
            \overline{\mf{p}}
        \end{bmatrix}-\begin{bmatrix}
            \bm{\Lambda}(\tau)^{-1}/\tau\\
            \bm{\Lambda}(\tau)^{-1}{\mf{Q}}
        \end{bmatrix}\nabla f(\overline{\mf{q}})\\
        &    \mf{S}_p(\tau)=\begin{bmatrix}
        -\bm{\Lambda}(\tau)^{-1}(\frac{1}{\tau}\mf{I}+{\mf{Q}})\mf{L}_m & -\frac{1}{\tau}\bm{\Lambda}(\tau)^{-1}\mf{L}_m \\
        \frac{1}{\tau}\bm{\Lambda}(\tau)^{-1}\mf{L}_m & -\bm{\Lambda}(\tau)^{-1}{\mf{Q}}\mf{L}_m 
        \end{bmatrix}\\
        \end{aligned}
    \end{equation}
The rest of the lemma follows from the change of variables
$$
\begin{aligned}
&\begin{bmatrix}
{\mf{q}} \\
{\mf{r}}
\end{bmatrix} = \mf{T}\begin{bmatrix}
{\mf{q}} \\
{\mf{p}}
\end{bmatrix}
\end{aligned}
$$
with 
$$
\begin{aligned}
&\quad\mf{T} = \begin{bmatrix}
\mf{I} & \mf{0} \\
-{\mf{Q}}/\tau & \mf{I}
\end{bmatrix}, \quad\mf{T}^{-1} = \begin{bmatrix}
\mf{I} & \mf{0} \\
{\mf{Q}}/\tau  & \mf{I}
\end{bmatrix}
\end{aligned}
$$
Note that $\bm{\Lambda}(\tau)$ and ${\mf{Q}}$ commute so that ${\mf{Q}}$ also commutes with $\bm{\Lambda}(\tau)^{-1}$. From this, a straightforward calculation can verify that:
$$
\begin{aligned}
&\mf{T}\begin{bmatrix}
            \bm{\Lambda}(\tau)^{-1}/\tau\\
            \bm{\Lambda}(\tau)^{-1}{\mf{Q}}
        \end{bmatrix} = \begin{bmatrix}
            \bm{\Lambda}(\tau)^{-1}/\tau\\
            \mf{0}
        \end{bmatrix}\\ &\mf{S}_r(\tau) = \mf{T}\mf{S}_p(\tau)\mf{T}^{-1}
\end{aligned}
$$
from which \eqref{eq:mid:sys2} is obtained, completing the proof.
\end{proof}
Given these results, we can show Theorem \ref{th:main}.

\begin{proof}(Of Theorem \ref{th:main})
    First, use Lemma \ref{lem:system} to write \eqref{eq:main:algorithm} as \eqref{eq:mid:sys2}. Denote with $\mf{y}=[\mf{q}^\top,\mf{r}^\top]^\top$. Let $\mf{y}^*$ the equilibrium of \eqref{eq:mid:sys2} and set $\mf{e}=\mf{y}-\mf{y}^*$. Hence, it follows from \eqref{eq:mid:sys2} that:
    $$
    \begin{aligned}
    \tilde{\mf{e}} &= \mf{S}_r(\tau)\overline{\mf{y}}-\mf{B}(\tau)\nabla f(\overline{\mf{q}})\\
    &=\mf{S}_r(\tau)\overline{\mf{e}}-\mf{B}(\tau){\bm{\psi}}(\overline{\mf{q}})
    \end{aligned}
    $$
    with $\mf{B}(\tau) = [ \Lambda(\tau)^{-1}/\tau, \mf{0}]^\top$ and $\bm{\psi}(\overline{\mf{q}})=\nabla f(\overline{\mf{q}})-\nabla f({\mf{q^*}})$.

    Now, assume that \eqref{eq:lmi} holds and set a Lyapunov function candidate $$V(\mf{e})=\mf{e}^\top\mf{P}(\tau)\mf{e}/2$$ with $\mf{P}(\tau)$ in \eqref{eq:lmi:P}. Hence,
    $$
    \begin{aligned}
    &V(\mf{e}^+)-V(\mf{e}) = \overline{\mf{e}}^\top\mf{P}(\tau)\tilde{\mf{e}}\\
    &=\overline{\mf{e}}^\top(\mf{P}(\tau)\mf{S}_r(\tau)+\mf{S}_r(\tau)^\top\mf{P}(\tau))\overline{\mf{e}}-\overline{\mf{e}}^\top\mf{P}(\tau)\mf{B}(\tau){\bm{\psi}}(\overline{\mf{q}})
    \end{aligned}
    $$
    Now, note that $\mf{P}(\tau)\mf{B}(\tau)=[\mf{I}/\tau, \mf{P}_{12}\bm{\Lambda}(\tau)/\tau]^\top$ such that
    \begin{equation}
    \label{eq:eB}
    \begin{aligned}
&-\overline{\mf{e}}^\top\mf{P}(\tau)\mf{B}(\tau){\bm{\psi}}(\overline{\mf{q}}) = -(\overline{\mf{q}}-\mf{q}^*)(\nabla f(\overline{\mf{q}})-\nabla f({\mf{q^*}}))/\tau\\
&-(\overline{\mf{r}}-\mf{r}^*)^\top\mf{P}_{12}^\top\bm{\Lambda}(\tau)\bm{\psi}(\overline{\mf{q}})/\tau\\
&\leq -(\mu\mfs{N}/\tau)\|\overline{\mf{q}}-\mf{q}^*\|^2+\gamma(\tau)\|\mf{P}_{12}(\overline{\mf{r}}-\mf{r}^*)\|\|\overline{\mf{q}}-\mf{q}^*\|
    \end{aligned}
    \end{equation}
where we used Proposition \ref{prop:boyd} due to strong convexity of $f$,  used $$\|(\nabla f(\overline{\mf{q}})-\nabla f({\mf{q^*}})\|\leq L \|\overline{\mf{q}}-\mf{q}^*\|$$ due to the Lipschitz property of $\nabla f$ and setting  
$$
\gamma(\tau)=(L/\tau)\|\bm{\Lambda}(\tau)\|
$$ Moreover, one can use Young's inequality to get
$$
\begin{aligned}
&-(\mu\mfs{N}/\tau)\|\overline{\mf{q}}-\mf{q}^*\|^2+\gamma(\tau)\|\mf{P}_{12}(\overline{\mf{r}}-\mf{r}^*)\|\|\overline{\mf{q}}-\mf{q}^*\|\\
    &\leq \left(-\frac{\mu\mfs{N}}{\tau}+\frac{\varepsilon\gamma(\tau)}{2}\right)\|\overline{\mf{q}}-\mf{q}^*\|^2+\frac{\gamma(\tau)}{2\varepsilon}\|\mf{P}_{12}(\overline{\mf{r}}-\mf{r}^*)\|^2\\
&\leq \overline{\mf{e}}^\top \mf{S}_\gamma(\tau,\varepsilon)\overline{\mf{e}}
\end{aligned}
$$
with
\begin{equation}
\label{eq:S:gamma}
\mf{S}_\gamma(\tau,\varepsilon) = \begin{bmatrix}
(\gamma(\tau)\varepsilon/2-\mu\mfs{N}/\tau)\mf{I} & \mf{0} \\
\mf{0} & \gamma(\tau)\mf{U}/(2\varepsilon)
\end{bmatrix}
\end{equation}
where $\mf{P}_{12}^\top\mf{P}_{12}\preceq \mf{U}$, which follows from \eqref{eq:lmi:U} due to a Schur complement.
Henceforth:
\begin{equation}
\label{eq:V:decrease}
\begin{aligned}
V(\mf{e}^+)-V(\mf{e})&\leq \overline{\mf{e}}^\top(\mf{P}(\tau)\mf{S}_r(\tau)+\mf{S}_r(\tau)^\top\mf{P}(\tau)+\mf{S}_\gamma(\tau))\overline{\mf{e}}\\
&\leq -u\|\overline{\mf{q}}-\mf{q}^*\|^2
\end{aligned}
\end{equation}
due to \eqref{eq:lmi:S}. Hence, $V(\mf{e})$ decrease unless $\overline{\mf{q}}=\mf{q}^*$. However, $\overline{\mf{q}}=\mf{q}^*$ implies $\overline{\mf{q}}=(\mathds{1}\otimes\mf{I}_m)\bm{\theta}^*$ (see Example \ref{ex:distro}) such that $\tilde{\mf{r}}=\mf{L}_m\overline{\mf{q}}=\mf{0}$ from \eqref{eq:mid:sys2}, meaning that equilibrium was reached for $\tilde{\mf{r}}$ as well. Henceforth, $\mf{e}[k]$ converges asymptotically to the origin, completing the proof.
\end{proof}

\section{Proof of the corollaries}
\label{ap:cor}
\subsection{Proof of Corollary \ref{cor:special}}
Set $\mf{P}_{12}=\mf{U}=\mf{0}$, $\mf{P}_{22}=\mf{I}/\tau$ and $u=\mu, \varepsilon=0$. With these choices, \eqref{eq:lmi:P} is complied since $\bm{\Lambda}(\tau)$ is positive definite and \eqref{eq:lmi:U} is complied trivially. Moreover, $\mf{W}(\tau)=\mf{P}(\tau)\mf{S}_r(\tau)+\mf{S}_r(\tau)^\top\mf{P}(\tau)+\mf{S}_\gamma(\tau, 0)$ for these choices which result in
$$
\mf{W}(\tau)=\begin{bmatrix}
    -(\frac{1}{\tau}\mf{L}_m + {\mf{Q}}\mf{L}_m+\mf{L}_m{\mf{Q}}) - \mu\mf{I} &\mf{0}\\
    \mf{0} & \mf{0}
\end{bmatrix}
$$
Since $\mf{L}_m$ is positive semi-definite and $$\mf{Q}\mf{L}_m+\mf{L}_m\mf{Q} = (\mf{D}^2-\mf{A}^2)\otimes \mf{I}_m\succeq\mf{0}$$ due to \eqref{eq:special}, therefore \eqref{eq:lmi:S} is complied for these choices for all $\tau>0$. Note that if $\mathcal{G}$ is a cycle, then $\mf{D}=2\mf{I}$ and $\mf{D}^2-\mf{A}^2 = 4\mf{I}-(2\mf{I}+\mf{A}_2)$ where $\mf{A}_2$ is the adjacency matrix of the graph $\mathcal{G}_2$ with edges connecting agents two steps away in the original graph $\mathcal{G}$. This means that $\mf{A}_2$ is full of zeros, except for two 1's per row. Henceforth, $$\mf{D}^2-\mf{A}^2=2\mf{I}_\mfs{N}-\mf{A}_2$$ is diagonally dominant and thus positive semi-definite \cite[Section 6.1]{horn}. Similarly, if $\mathcal{G}$ is fully connected, then $\mf{D}^2-\mf{A}^2 = \mfs{N}\mf{I}_\mfs{N}-\mfs{A}^2$ and $\mf{A}=\mathds{1}\mathds{1}^\top-\mf{I}_\mfs{N}$ so that $$\mf{D}^2-\mf{A}^2=(\mfs{N}-1)\mf{I}_\mfs{N}-(\mfs{N}-2)\mathds{1}\mathds{1}^\top.$$ Similarly as before, $\mf{D}^2-\mf{A}^2$ is diagonally dominant and thus positive semi-definite.

\subsection{Proof of Corollary \ref{cor:bound}}
Make the same choices as in the proof of Corollary \ref{cor:special}, but let $\mathcal{G}$ be an arbitrary connected undirected graph and arbitrary $u>0$. Henceforth, 
$$
\begin{aligned}
&-(\frac{1}{\tau}\mf{L}_m + {\mf{Q}}\mf{L}_m+\mf{L}_m{\mf{Q}}) - \frac{\mu}{\tau}\mf{I} \\&\preceq \left(\|{\mf{Q}}\mf{L}_m+\mf{L}_m{\mf{Q}}\|-\frac{\mu}{\tau}\right)\mf{I}\preceq -u\mf{I}\end{aligned}
$$
by the bound on $\tau$ for some $u>0$. Hence, \eqref{eq:lmi:S} is complied due to ${\mf{Q}}\mf{L}_m+\mf{L}_m{\mf{Q}}=(\mf{D}^2-\mf{A}^2)\otimes \mf{I}_m$.

\subsection{Proof of Corollary \ref{cor:linear}}
First note that
$$
\bm{\psi}(\overline{\mf{q}})=\nabla f(\overline{\mf{q}})-\nabla f({\mf{q^*}}) = \mf{H}(\overline{\mf{q}}-\mf{q}^*)
$$
with $\mf{H}=\text{\normalfont blockdiag}(\mf{H}_1,\dots,\mf{H}_\mfs{N})$. Now, we follow all steps from the proof of Theorem \ref{th:main} but instead of \eqref{eq:eB}, do:
 \begin{equation}
    \begin{aligned}
&-\overline{\mf{e}}^\top\mf{P}(\tau)\mf{B}(\tau){\bm{\psi}}(\overline{\mf{q}}) = \overline{\mf{e}}^\top\mf{S}_h(\tau)\overline{\mf{e}}
    \end{aligned}
    \end{equation}
with
\begin{equation}
\label{eq:Sh}
\mf{S}_h(\tau) =\begin{bmatrix}
\mf{H}/\tau & \mf{0} \\
\mf{P}_{12}^\top\bm{\Lambda}(\tau)\mf{H}/\tau & \mf{0}
\end{bmatrix}
\end{equation}
Therefore, similar to \eqref{eq:V:decrease}:

\begin{equation}
\begin{aligned}
V(\mf{e}^+)-V(\mf{e})&\leq \overline{\mf{e}}^\top(\mf{P}(\tau)\mf{S}_r(\tau)+\mf{S}_r(\tau)^\top\mf{P}(\tau)+\mf{S}_h(\tau))\overline{\mf{e}}\\
&\leq -u\|\overline{\mf{q}}-\mf{q}^*\|^2
\end{aligned}
\end{equation}
from which convergence follows using the same arguments at the end of the proof of Theorem \ref{th:main}.

\bibliographystyle{IEEEtran}

\end{document}